\renewcommand{\PrintDOI}[1]{%
  \href{http://dx.doi.org/#1}{{\tt DOI:#1}}%
}
\renewcommand{\eprint}[1]{#1}
\newcommand{\R}{\mathbb R}
\newcommand{\C}{\mathbb C}
\newcommand{\s}[1]{\mathcal{#1}}
\newcommand{\bb}[1]{\mathbb{#1}}
\newcommand{\fr}[1]{\mathfrak{#1}}
\newcommand{\sm}[1]{\mathsf{#1}}
\newcommand{\rP}{\mathrm P}
\newcommand{\rT}{\mathrm T}
\newcommand{\vph}{\varphi}
\newcommand{\vth}{\vartheta}
\newcommand{\ol}[1]{\overline{#1}}
\newcommand{\ox}{\otimes}
\newcommand{\pr}{\prime}
\newcommand{\Ad}{\mathrm{Ad}}
\newcommand{\ad}{\mathrm{ad}}
\newcommand{\SU}{\mathrm{SU}}
\newcommand{\SO}{\mathrm{SO}}
\newcommand{\rE}{\mathrm{E}}
\newcommand{\rU}{\mathrm{U}}
\newcommand{\Gr}{\mathrm{Gr}}
\newcommand{\medwedge}{{\textstyle\bigwedge}}
\newcommand{\scrap}[1]{} 
\newcommand{\largewedge}{\bigwedge}
\DeclareMathOperator{\Sym}{Sym}
\DeclareMathOperator{\Lin}{Lin}
\newtheorem{thm}{Theorem}[section]
\newtheorem{prop}[thm]{Proposition}
\newtheorem{lem}[thm]{Lemma}
\newtheorem{cor}[thm]{Corollary}
\theoremstyle{definition}
\theoremstyle{remark}
\newtheorem{rmk}[thm]{Remark}
\newtheorem{ex}[thm]{Example}
\begin{document}

\title{Poisson--Lie Group structures on semidirect products}

\author{Floris Elzinga}
\address{Department of Mathematics, University of Oslo, P.O.\ box 1053, Blindern, 0316 Oslo, Norway}
\email{florise@math.uio.no}

\author{Makoto Yamashita}
\email{makotoy@math.uio.no}

\thanks{The authors were partially supported by the NFR funded project 300837 ``Quantum Symmetry''.}

\date{10.10.2022, minor revisions; v1: 28.03.2022}

\begin{abstract}
We look at the Poisson structure on the total space of the dual bundle to the Lie algebroid arising from a matched pair of Lie groups.
This dual bundle, with the natural semidirect product group structure, becomes a Poisson--Lie group as suggested by a recent work of Stachura.
Moreover, when we start from matched pairs given by the Iwasawa decomposition of simple Lie groups, the associated Lie bialgebra is coboundary.
\end{abstract}

\maketitle

\section{Introduction}

Matched pairs of subgroups have been used to produce interesting examples of Hopf algebras \cite{MR611561} in the 80's.
Its Hopf algebraic analogue, such as the Drinfeld double of a Hopf algebra, proved to be a very fruitful source of interesting objects as further elaborated upon by Majid \cite{MR1045735} and collaborators.

More recently in \citelist{\cite{MR3681683}\cite{MR3950818}}, Stachura gave a groupoid quantization of the $\kappa$-Poincaré group \cite{MR1280372}.
His model is based on a Lie groupoid arising from a matched pair of subgroups in $\SO(N+1,1)$, and its associated Lie algebroid. 
The total space of the dual bundle of a Lie algebroid can be given the structure of a Poisson manifold \cite{MR998124}, and the main result of \cite{MR3681683} amounts to identifying this Poisson structure with that of the $\kappa$-Poincaré group.

Motivated by this work, we look at the general case in the framework of matched pairs of Lie groups, and a particular case arising from the Iwasawa decomposition for real simple Lie groups.

In the general setting, suppose that $B, C \subset G$ is a matched pair of Lie groups with their Lie algebras $\fr b$, $\fr c$, and $\fr g$, meaning that $B \cap C = \{ e \}$ and $B C$ is an open subset of $G$.
The induced partial action of $C$ on $B$ defines a Lie groupoid $\s{G}_B = B C \cap C B$ with base $B$, and vice versa.
The dual bundle $E$ of the associated Lie algebroid is trivializable, and the fibers can be identified with the annihilator $\fr b^0 \subset \fr g^*$ of $\fr b$.

Our starting observation is that $B$ naturally acts on $\fr b^0$, whence $E$ has a semidirect group structure by combining this action with the linear group structure on $\fr b^0$.
Our main result (Theorem \ref{thm:PLGS}) is that the Poisson structure on $E$ is multiplicative with respect to this group structure.
Hence $E$ becomes a Poisson--Lie group.

This can be further motivated by the fact that, when $B C \subset G$ is dense, the operator algebra associated with $\s{G}_B$ represents a locally compact quantum group, namely the bicrossed product of $B$ and $C$ \cite{MR1969723}.
In this scheme, the crossed product of $C$ with respect to the induced action on the function algebra of $G / C$, identified with the function algebra of $B$, corresponds to the quantized algebra of functions on the Poisson--Lie group $E$.
However, as the action of $B$ on $C$ is not by group automorphisms, we `break' the group structure on $C$ and look at the semidirect product of $\fr b^0 \simeq \fr c^*$ by $B$.

As an example, when we start from the matched pair of $\rU(1)$ and the $(a x + b)$-group in $\SU(1, 1)$, we get a double cover of the $\rE(2)$-group as $E$.
The corresponding Poisson structure is essentially the one studied by Maślanka in \cite{MR1267935}.

One interesting feature of this Poisson--Lie group is that the associated cobracket on its Lie algebra is coboundary.
Motivated by this, we look at the matched pairs arising from the Iwasawa decomposition of real simple Lie groups $G$ with finite center whose maximal compact subgroups (which will play the role of $B$) have nondiscrete center.
We show that the cobracket on $E$ is always coboundary (Theorem \ref{thm:gen-e2-from-herm-simpl-cobdry}).

The paper is organized as follows:
in Section \ref{sec:prelim} we collect some preliminary material and fix our conventions.
In Section \ref{sec:PL-grps-from-matched-pairs}, we prove our first main result.
We also include a small discussion on the deformation quantization picture.
In Section \ref{sec:cob-lie-bialg-from-real-simple}, we turn to matched pairs in real simple Lie groups and prove our second main result.

\paragraph{Acknowledgement}
We thank Piotr Stachura for illuminating comments on an early draft of this work, and the anonymous reviewer for their careful reading on the draft and valuable suggestions.

\section{Preliminaries}
\label{sec:prelim}

\subsection{Conventions}

Given a (real) vector space $V$, we denote its linear dual by $V^*$, and the annihilator subspace of a given subspace $W \subset V$ by
\[
W^0 = \{ \phi \in V^* \mid \forall w \in W \colon \phi(w) = 0 \}.
\]
We identify the second exterior power $\medwedge^2 V$ with a subspace of the second tensor power  $\mathcal{T}^2 V$ in such a way that the duality pairing satisfies
\[
\langle x \wedge y, \phi \otimes \psi \rangle = \phi(x) \psi(y) - \psi(x) \phi(y)
\]
for $x, y \in V$ and $\phi, \psi \in V^*$.

For  (real) Lie groups $G$, $A$, etc., we denote their Lie algebras by $\fr{g}$, $\fr{a}$, etc.
The adjoint action of $G$ on $\fr{g}$ is denoted by $\Ad$, and the coadjoint action of $G$ on $\fr{g}^*$ is by $\Ad^\vee$.
The corresponding Lie algebra actions of $\fr g$ are denoted by $\ad$ and $\ad^\vee$, so we have $\ad(x)(y) = [x, y]$ and
\[
\langle \ad^\vee(x)(\phi), y \rangle = - \langle \phi, \ad(x)(y) \rangle = \phi([y,x]).
\]
When $B, C$ are subgroups of $G$, $B C$ denotes the set of elements of the form $b c \in G$ for $b \in B$ and $c \in C$.

\subsection{Matched pairs and Associated Structures}

Let $G$ be a second countable locally compact Hausdorff topological group.
By a \emph{matched pair of subgroups} of $G$, we mean a pair of closed subgroups $B, C$ such that $B \cap C = \{ e \}$ and that $B C$ is open in $G$.
Thus, any element $g$ in the open set $B C \cap C B$ has unique factorizations $g = b c = c' b'$ for $b, b' \in B$ and $c, c' \in C$.

Given such a matched pair, we have a groupoid $\s{G}_{G, B, C}$ (denoted by $\Gamma_B$ in \cite{MR3681683})
defined as follows:
\begin{itemize}
\item base space: $\s{G}_{G, B, C}^{(0)} = B$
\item arrow space: $\s{G}_{G, B, C}^{(1)} = B C \cap C B$
\item range and source maps: $r(g) = b$, $s(g) = b'$ for $g = b c = c' b'$ as above
\item composition: $g \circ g' = b c c''$ (product in $G$) when $g = b c = c' b'$ and $g' = b' c''$
\end{itemize}
The composition is well-defined as we have
\[
b c c'' = g b'^{-1} g' = c' c''' b''
\]
when $g' = b' c'' = c''' b''$.
We will write $\s{G}_B = \s{G}_{G, B, C}$.

If in addition $G$ is a Lie group, $\s{G}_B$ becomes a Lie groupoid.
The Lie algebras form a matched pair \cite{MR1045735}*{Section 4}: $\fr{g} = \fr{b} \oplus \fr{c}$ as vector space, with $\fr b$ and $\fr c$ sitting inside as subalgebras.
Moreover, the groupoid C$^*$-algebras of $\s{G}_B$ make sense as completions of the algebra of compactly supported sections of the half-density bundle \cite{MR636521}, or the one of compactly supported smooth functions and the convolution product with respect to a Haar system \cite{MR1855249}.
If $G$ is a \emph{double Lie group}, i.e., $G = B C$, these algebras are nothing but the crossed products for the corresponding action of $C$ on $C_0(B)$.

In general, when $B C$ is dense in $G$ the partial action of $C$ on $B$, whose graph is $\s{G}_B$, is densely defined so that $C$ acts on $L^\infty(B)$, and the associated crossed product von Neumann algebra $M = C \ltimes L^\infty(B)$ admits the structure of a semi-regular locally compact quantum group, which is regular when $G = B C$ \cite{MR1969723}.
Its dual algebra is given by $\hat{M} = L^\infty(C) \rtimes B$.
In particular, the associated reduced C$^*$-algebras are given by the reduced groupoid C$^*$-algebras $A = C^*_r(\s{G}_B)$ and $\hat{A} = C^*_r(\s{G}_{G, C, B})$.

\subsection{Lie Groupoids and Lie Algebroids}
\label{sec:lie-grpd-lie-algbd}

Let $\s{G}$ be a Lie groupoid with base $M = \s{G}^{(0)}$.
Then we get a Lie algebroid $\s{L}(\s{G})$ on $M$ in the standard way, as follows.
As a vector bundle, it is given by $\s{L} = \ker(\rT r \colon \iota^* \rT \s{G} \to \rT M)$, where $\iota\colon M \to \s G$ is the embedding as identity morphism in $\s{G}$, and $\rT r$ is the tangent map of $r$.
The bracket on $\Gamma(\s{L})$ is given by identifying it with the space of left invariant vector fields on $\s{G}$, and restricting the usual bracket on $\fr{X}(\s{G}) = \Gamma(\rT \s{G})$.
The anchor map $\sm{a}\colon \s{L} \to \rT M$ is the restriction of $\rT s$.

If $G$ is a Lie group and its subgroups $B, C$ form a matched pair in $G$, then the Lie algebroid $\s{L}(\s{G}_B)$ can be modeled on $B \times \fr{c}$.
Namely, at $b \in B$, elements of the fiber $\s{L}(\s{G}_B)_b$ correspond to the tangent vectors at $b$ with integral curve $b \exp(t y)$ for $y \in \fr{c}$.
Equivalently, we use the left translation map $L_b \colon g \mapsto b g$ to identify $\s{L}(\s{G}_B)_b$ with $\fr{c}$.
This gives the trivialization
\begin{equation}
\label{eq:Lie-algebroid-trivialization}
\s{L}(\s{G}_B) \cong B \times \fr{c}.
\end{equation}

\section{Poisson--Lie Groups from Matched Pairs}
\label{sec:PL-grps-from-matched-pairs}

\subsection{Poisson Structures from Lie Algebroids}

Let $G$ be a Lie group, and $B$, $C$ be its subgroups forming a matched pair.
Let us look in detail at the groupoid $\s{G}_B$ over $B$ and the associated Lie algebroid $\s{L}(\s{G}_B)$.

Consider $E = (\rT B)^0$, the (total space of the) subbundle of $\rT^* G|_B = \iota^* \rT^* \s{G}_B$ orthogonal to $\rT B \subset \iota^* \rT \s{G}_B$.
We identify $E$ with $\fr{b}^0 \times B$ by \emph{right} translations.
Using this presentation, we interpret it as the semidirect product for the natural action of $B$ on $\fr{b}^0$.
That is, given $g = (v,b)$ and $h = (w,b^\pr)$ in $\fr{b}^0 \times B$, we put
\begin{align*}
g h &= (v + \Ad^\vee_b w , b b^\pr) , &
g^{-1} &= \left( - \Ad^\vee_{b^{-1}} w , b^{-1} \right).
\end{align*}
This is consistent with viewing $E$ as a subgroup of $\rT^*G$, which has a semidirect product structure $\fr{g}^* \rtimes G$ coming from $\Ad^\vee$.
Let us write the Lie algebra of $E$ as $\fr{e} = \fr{b}^0 \oplus \fr{b}$.

\begin{lem}
The action of $g = (v, b)$ for $\Ad^{E}$ is as follows,
\begin{align}
\label{eq:ADEAZ} \Ad^{E}_g (\psi) &= \Ad^\vee_b \psi && (\psi \in \fr{b}^0),\\
 \label{eq:ADEA} \Ad^{E}_g (y) &= \Ad_b y - \ad^\vee ( \Ad_b y ) (v) && (y \in \fr{b}).
\end{align}
\end{lem}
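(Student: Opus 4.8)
The plan is to realize $E$ as a closed subgroup of the cotangent group $T^*G = \fr{g}^* \rtimes G$, in which $\fr{g}^*$ is the abelian normal factor and $G$ acts by $\Ad^*$, and to compute $\Ad^E_g$ by differentiating the conjugation action there. Both displayed formulas then fall out of a single computation, with $\psi$ ranging over $\fr{g}^*$ (in particular over $\fr{b}^0$) and $y$ over $\fr{g}$ (in particular over $\fr{c}$). Concretely, I would factor $g = (v, a) = (v, e)\,(0, a)$ and invoke multiplicativity of the adjoint representation, $\Ad^E_g = \Ad^E_{(v,e)} \circ \Ad^E_{(0,a)}$, reducing everything to the two elementary factors.

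For the factor $(0, a)$, conjugation is by an honest element of $G$, so it preserves the splitting $\fr{g}^* \oplus \fr{g}$ and acts by $\Ad^*_a$ on the first summand and by $\Ad_a$ on the second; differentiating $(0,a)(w, b)(0, a^{-1}) = (\Ad^*_a w,\, a b a^{-1})$ gives $\Ad^E_{(0,a)}(X, Y) = (\Ad^*_a X,\, \Ad_a Y)$. For the factor $(v, e)$, which lies in the abelian normal subgroup, conjugation fixes the $\fr{g}^*$-directions and shears the $\fr{g}$-directions. Here the key step is to evaluate $(v, e)(0, b)(v, e)^{-1} = (v - \Ad^*_b v,\, b)$ and differentiate along $b = \exp(t y)$; recalling that $\tfrac{d}{dt}\big|_{t=0} \Ad^*_{\exp(t y)} v = \ad^*(y)(v)$, this yields $\Ad^E_{(v,e)}(X, Y) = (X - \ad^*(Y)(v),\, Y)$.

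Composing the two factors gives $\Ad^E_g(X, Y) = \big(\Ad^*_a X - \ad^*(\Ad_a Y)(v),\, \Ad_a Y\big)$. Setting $(X, Y) = (\psi, 0)$ with $\psi \in \fr{b}^0$ recovers \eqref{eq:ADEAZ}, the shear term dropping out because $\Ad^E_{(v,e)}$ fixes $\fr{g}^*$; setting $(X, Y) = (0, y)$ with $y \in \fr{c}$ recovers \eqref{eq:ADEA}.

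The only real obstacle is bookkeeping with the sign and duality conventions: one must pin down the direction of the infinitesimal coadjoint action so that the shear term reads $-\ad^*(Y)(v)$ against the paper's convention $\langle \ad^*(x)(\phi), z \rangle = \phi([z, x])$, which I would verify by differentiating $\langle \Ad^*_{\exp(tx)}\phi, z\rangle = \langle \phi, \Ad_{\exp(-tx)} z\rangle$ at $t = 0$. Should one further wish to read \eqref{eq:ADEA} as an endomorphism of $\fr{e} = \fr{b}^0 \oplus \fr{b}$ rather than of $\mathrm{Lie}(T^*G)$, one should check that $\ad^*(\Ad_a y)(v) \in \fr{b}^0$ when $y \in \fr{b}$; this follows from $v \in \fr{b}^0$ together with the fact that $B$ is a subgroup, so that $\Ad_a$ and $\ad$ preserve $\fr{b}$.
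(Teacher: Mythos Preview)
Your proposal is correct and follows essentially the same approach as the paper: both differentiate the conjugation action inside the ambient semidirect product $T^*G = \fr{g}^* \rtimes G$, arriving at the same formulas. The only difference is organizational---you factor $g = (v,e)(0,a)$ and compose, while the paper differentiates $g(t\psi,e)g^{-1}$ and $g(0,\exp(ty))g^{-1}$ directly---but the computations are the same in substance, and your extra care with the sign convention for $\ad^*$ is welcome.
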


\begin{proof}
First let us consider $\psi$.
The vector $\Ad^{E}_g (\psi)$ is the differential at $t = 0$ of the integral curve $g(t\psi, e)g^{-1}$.
By the commutativity of $\fr{b}^0$, this is equal to
\[
\left.\dv{t}\right|_{t=0} \left( \Ad^\vee_b t\psi , e \right) = \Ad^\vee_b \psi .
\]

As for $y$, the vector $\Ad^{E}_g (y_i)$ is the differential of the integral curve $g (v, \exp(t y)) g^{-1}$.
Computing the adjoint by $g$, we get
\[
\left( v - \Ad^\vee_{b\exp(t y)b^{-1}} v , b \exp(t y) b^{-1} \right).
\]
Its differential is indeed $\Ad_b y - \ad^\vee( \Ad_b y ) (v)$.
\end{proof}

Note that $E$ is isomorphic to the dual vector bundle of $\s{L}(\s{G}_B)$ by the duality between $\fr{c}$ and $\fr{b}^0$.
Let us describe the induced Poisson structure on $E$ \cite{MR998124}.
The bracket is defined on fiberwise linear functions $\tilde{X}$ on $E$ coming from the sections $X$ of $\s{L}(\s{G}_B)$, and pullbacks $\pi^*(f)$ of the smooth functions on $B$, as
\begin{align}
\label{eq:LGPB}
\left\{ \widetilde{X_1} , \widetilde{X_2} \right\} &= \widetilde{\left[ X_1 , X_2 \right]},&
\left\{ \widetilde{X} , \pi^*(f_1) \right\} &= \pi^*\left( \sm{a}(X) f \right),&
\left\{ \pi^*(f_1) , \pi^*(f_2) \right\} &= 0.
\end{align}
To obtain a more concrete formula for the second relation, let us denote the projections from $\fr{g} \simeq \fr b \oplus \fr c$ to $\fr{b}$ and $\fr{c}$ by $P_{\fr{b}}$ and $P_{\fr{c}}$ respectively.
Then, given $y \in \fr{c}$ and $b \in B$, the corresponding section (up to the trivialization \eqref{eq:Lie-algebroid-trivialization}) $X^L_y$ of $\s L(\s G_B)$ satisfies
\begin{equation}
\label{eq:anchor-map}
\sm{a}(X^L_y)(b) = (\rT R_b)_e P_{\fr{b}} \Ad_b y
\end{equation}
with respect to the right translation map $R_b\colon b^\pr \to b^\pr b$.

Our first goal is to prove the following.

\begin{thm}\label{thm:PLGS}
The Poisson bracket on $E$ characterized by \eqref{eq:LGPB}, together with the semidirect product group structure, defines a Poisson--Lie group structure.
\end{thm}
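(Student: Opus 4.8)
The plan is to prove multiplicativity of the bracket directly, i.e.\ to check that the group multiplication $m\colon E\times E\to E$ is a Poisson map. Writing $L_g$, $R_h$ for left and right translation on $E$, this amounts to the identity
\[
\{F_1,F_2\}(gh)=\{F_1\circ L_g,F_2\circ L_g\}(h)+\{F_1\circ R_h,F_2\circ R_h\}(g)
\]
for all $F_1,F_2\in C^\infty(E)$ and $g,h\in E$. Since both sides are skew biderivations in $(F_1,F_2)$, it suffices to verify it when $F_1,F_2$ range over a generating family of functions, for which I take the base pullbacks $\pi^*(f)$, $f\in C^\infty(B)$, and the fibrewise-linear functions $\widetilde{X^L_y}$, $y\in\fr{c}$, of \eqref{eq:LGPB}. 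The one subtlety to settle at the outset is the coordinate form of $\widetilde{X^L_y}$: since $E\cong\fr{b}^0\times B$ is trivialized by right translation whereas $\s{L}(\s{G}_B)\cong B\times\fr{c}$ is trivialized by left translation \eqref{eq:Lie-algebroid-trivialization}, the duality pairing acquires an adjoint twist, and one finds $\widetilde{X^L_y}(v,a)=\langle v,\Ad_a y\rangle$.

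First I would record the translates of the generators. Using $\langle\Ad^*_a w,\Ad_a\xi\rangle=\langle w,\xi\rangle$ and the fact that $a\mapsto P_{\fr{c}}\Ad_a$ is a representation of $B$ on $\fr{c}$ (immediate from $\Ad_a\fr{b}\subseteq\fr{b}$ for $a\in B$), a short computation with $g=(v,a)$, $h=(w,b)$ gives that $\pi^*(f)$ is sent to base pullbacks by both $L_g$ and $R_h$, that $\widetilde{X^L_y}\circ L_g=\widetilde{X^L_y}+\pi^*(\mu)$ for some $\mu\in C^\infty(B)$, and that $\widetilde{X^L_y}\circ R_h=\widetilde{X^L_{P_{\fr{c}}\Ad_b y}}+\mathrm{const}$. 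The pair $(\pi^*f_1,\pi^*f_2)$ is then trivial from $\{\pi^*f_1,\pi^*f_2\}=0$. For the mixed pair $(\widetilde{X^L_y},\pi^*f)$, relation \eqref{eq:LGPB} and the anchor formula \eqref{eq:anchor-map} reduce the identity, after cancelling the common right-translation Jacobians, to
\[
P_{\fr{b}}\Ad_{ab}y=\Ad_a\!\left(P_{\fr{b}}\Ad_b y\right)+P_{\fr{b}}\Ad_a\!\left(P_{\fr{c}}\Ad_b y\right),
\]
which is simply the projection of $\Ad_{ab}=\Ad_a\Ad_b$ onto $\fr{b}$ along $\fr{g}=\fr{b}\oplus\fr{c}$, hence holds automatically.

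The essential case, and the one I expect to be the main obstacle, is the fibre-fibre pair $(\widetilde{X^L_{y_1}},\widetilde{X^L_{y_2}})$. Its left-hand side is $\widetilde{[X^L_{y_1},X^L_{y_2}]}(gh)$, so I would first make the Lie-algebroid bracket of two constant sections explicit; this is where the matched-pair structure genuinely enters, the bracket being the $\fr{c}$-bracket corrected by the mutual actions of $\fr{b}$ and $\fr{c}$. On the right-hand side the left-translation term contributes, besides a fibre-fibre bracket, cross terms $\{\pi^*\mu,\widetilde{X^L_{y}}\}$ arising from the base-function shift $\mu$ above; by \eqref{eq:LGPB} these are anchor derivatives. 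The delicate point is that these anchor cross terms must be exactly what is needed to reconcile the brackets evaluated at $g$ and at $h$ with the bracket evaluated at $gh$, the remaining discrepancy being absorbed by the representation and cocycle identities for $P_{\fr{c}}\Ad$ and $P_{\fr{b}}\Ad$ already used above, together with the Jacobi identity on $\s{L}(\s{G}_B)$, which holds automatically since it is a genuine Lie algebroid. To keep the bookkeeping under control I would carry out this last step by passing to the right-trivialized bivector $\pi^R(g)=(R_{g^{-1}})_*\pi_g\in\medwedge^2\fr{e}$ and verifying the equivalent cocycle condition $\pi^R(gh)=\pi^R(g)+\Ad^{E}_g\pi^R(h)$, for which the adjoint action $\Ad^{E}$ is already at hand from \eqref{eq:ADEAZ}--\eqref{eq:ADEA}.
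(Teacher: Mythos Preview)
Your strategy is sound, and the two ``easy'' generator-pair checks you carry out are correct. In particular your identification $\widetilde{X^L_y}(v,a)=\langle v,\Ad_a y\rangle$, the translate formulas $\widetilde{X^L_y}\circ L_g=\widetilde{X^L_y}+\pi^*\mu$ with $\mu(b)=\langle v,\Ad_{ab}y\rangle$ and $\widetilde{X^L_y}\circ R_h=\widetilde{X^L_{P_{\fr c}\Ad_b y}}+\mathrm{const}$, and the reduction of the mixed case to $P_{\fr b}\Ad_{ab}y=\Ad_a P_{\fr b}\Ad_b y+P_{\fr b}\Ad_a P_{\fr c}\Ad_b y$ are all right.

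Two remarks, though. First, your last paragraph essentially defers the real work to the paper's method: passing to $\pi^R$ and verifying the cocycle identity $\pi^R(gh)=\pi^R(g)+\Ad^E_g\pi^R(h)$ is \emph{equivalent} to multiplicativity for all functions, not just the fibre--fibre pair, so once you commit to that route your earlier direct checks become redundant. You would still need to compute $\pi^R$ explicitly (this is the paper's Lemma~\ref{lem:Pi-corresp-to-eta}, giving $\pi^R=\eta=\eta_0+\eta_{\fr b}$) and then carry out the two-page cocycle verification; nothing in your sketch shortcuts that.

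Second, and more interestingly, your direct approach actually goes through for the fibre--fibre pair with less effort than you anticipate. The Lie-algebroid bracket of constant sections is simply $[X^L_{y_1},X^L_{y_2}]=X^L_{[y_1,y_2]}$ (action algebroid), so $\{\widetilde{X^L_{y_1}},\widetilde{X^L_{y_2}}\}(v,a)=\langle v,\Ad_a[y_1,y_2]\rangle$. Plugging in your translate formulas and computing the anchor cross-terms as you outline, the multiplicativity identity at $(g,h)$ reduces---after cancelling the $w$-part---to
\[
\langle v,\Ad_a[\Ad_b y_1,\Ad_b y_2]\rangle=\langle v,\Ad_a\bigl([P_{\fr b}\Ad_b y_1,\Ad_b y_2]+[\Ad_b y_1,P_{\fr b}\Ad_b y_2]+[P_{\fr c}\Ad_b y_1,P_{\fr c}\Ad_b y_2]\bigr)\rangle,
\]
and since $v\in\fr b^0$ and $\Ad_a\fr b\subset\fr b$ this is just the statement that the two bracketed expressions differ by $[P_{\fr b}\Ad_b y_1,P_{\fr b}\Ad_b y_2]\in\fr b$. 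No Jacobi identity and no cocycle machinery are needed---only $[\fr b,\fr b]\subset\fr b$. Completing the argument this way gives a proof genuinely shorter and more elementary than the paper's, which instead writes down the bivector $\eta$ explicitly and verifies the cocycle condition by a careful expansion using the auxiliary identity \eqref{eq:AEFLP}.
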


Let us first start with an observation: the invariance of $\fr{b}$ under $\Ad_b$ for $b \in B$ implies that $\fr{c} \simeq \fr{g} / \fr{b}$ admits an action of $B$, concretely given by the operators $(P_{\fr{c}} \Ad_b)_{b \in B}$.
Another viewpoint is to use the nondegenerate linear duality pairing between $\fr{c}$ and $\fr{b}^0$, given as the restriction of the canonical paring between $\fr{g}$ and $\fr{g}^*$. As $\fr b^0$ is invariant under the transformations $\Ad^\vee_b$ for $b \in B$, we obtain an action of $B$ on $\fr{c}$ by duality, which is the contragredient representation of the above one: for $b \in B$, $y \in \fr{c}$, and $\phi \in \fr{b}^0$, we have
\[
\langle P_{\fr{c}} \Ad_b y, \phi \rangle = \langle y, \Ad_{b^{-1}}^\vee \phi \rangle.
\]

Let us fix a basis $(y_i)_{i \in I}$ of $\fr{c}$, and take its dual basis $(\psi^i)_{i \in I}$ in $\fr{b}^0$ with respect to the duality pairing.
Then the element $t = \sum_{i \in I} \psi^i \otimes y_i$ is invariantly defined, and we have
\begin{equation}
\label{eq:psi-i-y-i-invariance}
\sum_i \Ad^\vee_b \psi^i \ox P_{\fr{c}}\Ad_b y_i = \sum_i \psi^i \ox y_i \quad (b \in B).
\end{equation}

The candidate group $1$-cocycle $E \to \largewedge^2 \fr{e}$ for our bracket is the function $\eta = \eta_0 + \eta_{\fr{b}}$, with the factors
\begin{align*}
\eta_0(g) &= \frac{1}{2} \sum_{i, j} \left\langle v , \Ad_b [ y_i , y_j ] \right\rangle \Ad^\vee_b \psi^i \wedge \Ad^\vee_b \psi^j,&
\eta_{\fr{b}}(g) &= \sum_i \Ad^\vee_b \psi^i \wedge P_{\fr{b}} \Ad_b y_i,
\end{align*}
where we write $g = (v, b)$ as above.

\begin{lem}
\label{lem:Pi-corresp-to-eta}
The Poisson bivector $\Pi \in \Gamma(E, \largewedge^2 \rT E)$ for \eqref{eq:LGPB} is given by $\Pi_g = (\rT R_g)_e^{\ox 2} \eta(g)$.
\end{lem}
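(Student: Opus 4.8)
The plan is to pin down the bivector by evaluating it on the differentials of a generating family of functions and to match the outcome with the right-translate of $\eta$. The functions $\widetilde{X^L_y}$ for $y \in \fr c$ together with the pullbacks $\pi^*(f)$ for $f \in C^\infty(B)$ generate the algebra in \eqref{eq:LGPB}, and at any point $g = (v,a)$ their right-translated differentials span $\fr e^*$; hence it suffices to verify, for each pair $f_1, f_2$ drawn from these generators, that
\[
\{ f_1, f_2 \}(g) = \langle \eta(g), \theta_1 \wedge \theta_2 \rangle, \qquad \theta_i = (\dd R_g)_e^*\, \mathrm{d}f_i \big|_g \in \fr e^*,
\]
since this is exactly the statement that $\Pi_g$ and $(\dd R_g)_e^{\ox 2}\eta(g)$ agree as bivectors at $g$.

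First I would record the relevant coordinate data. Combining the right-translation identification of $E$ with $\fr b^0 \times B$ and the left-trivialization \eqref{eq:Lie-algebroid-trivialization} of $\s L(\s G_B)$, one computes $\widetilde{X^L_y}(v,a) = \langle v, \Ad_a y \rangle$, the twist by $\Ad_a$ being forced by the use of opposite trivializations on the two bundles, while $\pi^*(f)(v,a) = f(a)$. Precomposing with right translation $R_g$ in $E = \fr b^0 \rtimes B$ and differentiating at the identity then gives, in $\fr e^* = (\fr b^0)^* \oplus \fr b^* \cong \fr c \oplus \fr b^*$,
\[
(\dd R_g)_e^*\, \mathrm{d}\widetilde{X^L_y}\big|_g = ( P_{\fr c}\Ad_a y ,\, 0 ), \qquad (\dd R_g)_e^*\, \mathrm{d}\pi^*(f)\big|_g = ( 0 ,\, (\dd R_a)_e^*\, \mathrm{d}f|_a ).
\]
The two features I would stress are that $\widetilde{X^L_y}$ carries no $\fr b^*$-leg after right-trivialization, and that the base-leg of $\pi^*(f)$ is precisely the covector paired against the anchor in \eqref{eq:anchor-map}.

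With these covectors in hand the three relations of \eqref{eq:LGPB} follow by pairing against $\eta = \eta_0 + \eta_{\fr b}$. For a pair $\widetilde{X^L_y}, \widetilde{X^L_z}$ both legs lie in $\fr c \cong (\fr b^0)^*$, so only $\eta_0 \in \largewedge^2 \fr b^0$ contributes; invoking \eqref{eq:psi-i-y-i-invariance} to recognise $(\Ad^*_a \psi^i)$ and $(P_{\fr c}\Ad_a y_i)$ as dual bases collapses the pairing to $\langle v, \Ad_a [y,z] \rangle$. For a mixed pair $\widetilde{X^L_y}, \pi^*(f)$ only $\eta_{\fr b} \in \fr b^0 \wedge \fr b$ survives, its $\fr b^0$-leg pairing with the $\fr c$-part of the first covector and its $\fr b$-leg with the $\fr b^*$-part of the second, and \eqref{eq:anchor-map} rewrites the result as $\pi^*(\sm{a}(X^L_y) f)$. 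For a pair $\pi^*(f_1), \pi^*(f_2)$ both covectors are purely in $\fr b^*$, and as $\eta$ has no $\largewedge^2 \fr b$-component the pairing vanishes. The first case additionally needs the algebroid bracket: since $\fr c$ is a subalgebra of $\fr g$, the sections $X^L_y$ satisfy $[X^L_y, X^L_z] = X^L_{[y,z]}$ as a constant section, so $\widetilde{[X^L_y, X^L_z]}(v,a) = \langle v, \Ad_a [y,z] \rangle$, matching the $\eta_0$-pairing on the nose.

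I expect the main difficulty to lie not in any single estimate but in the bookkeeping imposed by the two competing trivializations. The fibres of $E$ are trivialized on the right whereas $\s L(\s G_B)$ is trivialized on the left, which is what loads the fibrewise-linear functions with the factor $\Ad_a$; moreover the non-abelian part of $E$, entering $(\dd R_g)_e$ through the term $\ad^*(\zeta) v$, mixes the fibre and base directions and must be shown to redistribute consistently. Testing against the pulled-back differentials $\theta_i$ rather than pushing $\eta$ forward is what absorbs this redistribution cleanly, so that the sole genuinely geometric input left is the identity $[X^L_y, X^L_z] = X^L_{[y,z]}$. I would establish the latter by identifying the groupoid-left-invariant field generated by $y \in \fr c$ with the restriction to the open set $\s G_B \subset G$ of the left-invariant field $y^L$ on $G$, and then using that $\fr c$ is closed under the bracket of $\fr g$.
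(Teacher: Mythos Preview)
Your proposal is correct and follows essentially the same route as the paper's proof: both verify the identity by testing the candidate bivector against the differentials of the generating functions $\widetilde{X^L_y}$ and $\pi^*(f)$, using the duality \eqref{eq:psi-i-y-i-invariance} and the anchor formula \eqref{eq:anchor-map}. The only organizational difference is that you first compute the right-pulled-back covectors $(\dd R_g)_e^*\,\mathrm{d}f|_g$ in $\fr e^*$ and then pair with $\eta$, whereas the paper evaluates the individual pairings $\langle \dd\widetilde{y_i}|_g, (\dd R_g)_e(\,\cdot\,)\rangle$ directly; in particular your claim that the $\fr b^*$-component of $(\dd R_g)_e^*\,\mathrm{d}\widetilde{X^L_y}|_g$ vanishes is exactly the paper's computation that $\langle \dd\widetilde{y_i}|_g, (\dd R_g)_e(P_{\fr b}\Ad_a y_j)\rangle = 0$. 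You also make explicit the identity $[X^L_y, X^L_z] = X^L_{[y,z]}$ for constant sections, which the paper uses tacitly.
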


\begin{proof}
We have to check
\[
\langle \Pi_g, d f \otimes d f' \rangle = \{f, f'\}(g)
\]
for functions $f, f'$ of the form either $\tilde{X}$ or $\pi^*(f'')$.
The third relation in \eqref{eq:LGPB} is obviously satisfied as $\eta(g)$ does not have components in $\largewedge^2 \fr{b}$.

Note that we have
\begin{equation}
\label{eq:y-tilde-Ad-psi}
\begin{split}
\left\langle (\rT \widetilde{y_i})_g , (\rT R_g)_e \mathopen{} \left( \Ad^\vee_b \psi^j  \right)  \right\rangle
&= \left.\dv{t}\right|_{t=0} \widetilde{y_i} \mathopen{} \left( v + t \Ad^\vee_b \psi^j , b \right)\\
&= \left.\dv{t}\right|_{t=0} \left\langle v + t \Ad^\vee_b \psi^j , \Ad_b y_i \right\rangle\\
&= \left.\dv{t}\right|_{t=0} t \left\langle \psi^j , y_i \right\rangle
= \delta_{i,j}.
\end{split}
\end{equation}
Here, the adjoint action of $b$ on $y_i$ comes from the fact that we are comparing left translates from the Lie algebroid with right translates in $E$.
This, combined with \eqref{eq:anchor-map}, implies the second relation in \eqref{eq:LGPB} for $\Pi$.

Finally, we have
\[
\begin{split}
\left\langle (\rT \widetilde{y_i})_g , (\rT R_g)_e \left( P_{\fr{b}} \Ad_b y_j \right)  \right\rangle &= \left. \dv{t} \right|_{t=0} \left\langle \Ad^\vee_{\exp(t P_{\fr{b}}\Ad_b y_i)} (v) , \Ad_{\exp(t P_{\fr{b}}\Ad_b y_i)a} (y_j) \right\rangle \\
&= \left. \dv{t} \right|_{t=0} \left\langle v , \Ad_b y_j \right\rangle \\
&= 0 .
\end{split}
\]
Combining this with \eqref{eq:y-tilde-Ad-psi}, we obtain the first relation in \eqref{eq:LGPB} for $\Pi$.
\end{proof}

\begin{lem}
With $g \in E$ presented by $(v, b) \in \fr{b}^0 \times B$, we have
\begin{equation}
\eta_0(g) = \frac{1}{2} \left\langle v , \left[ y_i , y_j \right] \right\rangle \psi^i \wedge \psi^j - \Ad^\vee_b \psi^i \wedge \ad^\vee ( P_{\fr{b}}\Ad_b y_i ) (v) . \label{eq:AEFLP}
\end{equation}
\end{lem}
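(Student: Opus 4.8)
The plan is to start from the definition $\eta_0(g)=\frac12\sum_{i,j}\langle v,\Ad_a[y_i,y_j]\rangle\,\Ad^*_a\psi^i\wedge\Ad^*_a\psi^j$ and exploit that $\Ad_a$ is a Lie algebra automorphism of $\fr g$, so that $\Ad_a[y_i,y_j]=[\Ad_a y_i,\Ad_a y_j]$. Writing $b_i=P_{\fr b}\Ad_a y_i$ and $c_i=P_{\fr c}\Ad_a y_i$, so that $\Ad_a y_i=b_i+c_i$, I would expand the bracket bilinearly into the four pieces $[b_i,b_j]$, $[b_i,c_j]$, $[c_i,b_j]$, $[c_i,c_j]$ and treat each separately. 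Throughout I abbreviate $\Phi_i=\Ad^*_a\psi^i\in\fr b^0$; note that $(\Phi_i)$ and $(c_i)$ are dual bases for the pairing between $\fr b^0$ and $\fr c$, which is exactly the content of \eqref{eq:psi-i-y-i-invariance} (equivalently $\langle\Phi_i,c_j\rangle=\langle\psi^i,y_j\rangle=\delta_{ij}$).

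First, since $\fr b$ is a subalgebra, $[b_i,b_j]\in\fr b$ and $v\in\fr b^0$ annihilates it, so this piece contributes nothing. Next, the piece $[c_i,c_j]\in\fr c$ gives $\frac12\sum_{i,j}\langle v,[c_i,c_j]\rangle\,\Phi_i\wedge\Phi_j$; since assigning to the alternating form $(x,y)\mapsto\langle v,[x,y]\rangle$ on $\fr c$ its associated bivector in $\largewedge^2\fr b^0$ is basis-independent, I may evaluate it in the dual pair $(y_i,\psi^i)$ instead of $(c_i,\Phi_i)$, which yields precisely $\frac12\langle v,[y_i,y_j]\rangle\,\psi^i\wedge\psi^j$, the first term of \eqref{eq:AEFLP}.

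Finally, for the two cross pieces I would use antisymmetry of $\Phi_i\wedge\Phi_j$: relabelling $i\leftrightarrow j$ shows $\sum_{i,j}\langle v,[c_i,b_j]\rangle\,\Phi_i\wedge\Phi_j=\sum_{i,j}\langle v,[b_i,c_j]\rangle\,\Phi_i\wedge\Phi_j$, so together they give $\sum_{i,j}\langle v,[b_i,c_j]\rangle\,\Phi_i\wedge\Phi_j$. To match the second term of \eqref{eq:AEFLP}, observe that $\ad^*(b_i)(v)$ lies in $\fr b^0$ (it annihilates $\fr b$ because $v\in\fr b^0$ and $[\fr b,\fr b]\subset\fr b$), so expanding it in the basis $(\Phi_j)$ via $\langle\ad^*(b_i)(v),c_j\rangle=\langle v,[c_j,b_i]\rangle$ gives $\ad^*(b_i)(v)=\sum_j\langle v,[c_j,b_i]\rangle\,\Phi_j$; hence $-\sum_i\Phi_i\wedge\ad^*(b_i)(v)=\sum_{i,j}\langle v,[b_i,c_j]\rangle\,\Phi_i\wedge\Phi_j$, matching the cross terms. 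The main thing to get right is the bookkeeping of dual bases in the $\fr c$-$\fr c$ piece, namely justifying the basis change through \eqref{eq:psi-i-y-i-invariance}, together with the signs in the antisymmetrization of the cross terms; the rest is routine.
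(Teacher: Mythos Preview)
Your proposal is correct and follows essentially the same route as the paper: both expand $\Ad_a[y_i,y_j]=[\Ad_a y_i,\Ad_a y_j]$, split each $\Ad_a y_i$ into its $\fr b$- and $\fr c$-components, use $v\in\fr b^0$ to kill the $[\fr b,\fr b]$ piece, invoke the invariance \eqref{eq:psi-i-y-i-invariance} (which you phrase as basis-independence of the bivector attached to the alternating form $(x,y)\mapsto\langle v,[x,y]\rangle$ on $\fr c$) for the $[\fr c,\fr c]$ piece, and combine the two cross pieces by antisymmetry before identifying them with $-\Ad^*_a\psi^i\wedge\ad^*(P_{\fr b}\Ad_a y_i)(v)$. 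Your introduction of the shorthand $b_i,c_i,\Phi_i$ makes the bookkeeping slightly tidier, but the argument is the same.
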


\begin{proof}
The coefficient of $\Ad^\vee_b \psi^i \wedge \Ad^\vee_b \psi^j$ in $\eta_0(g)$ can be written as
\[
\left\langle v , \Ad_b \left[ y_i , y_j \right] \right\rangle = \left\langle v , P_{\fr{c}} \Ad_b \left[ y_i , y_j \right] \right\rangle = \left\langle v , P_{\fr{c}} \left[ \Ad_b y_i , \Ad_b y_j \right] \right\rangle
\]
using the assumption $v\in\fr{b}^0$.
The second term on the right hand side can be expanded as
\begin{multline*}
P_{\fr{c}} \left[ \left( P_{\fr{b}} + P_{\fr{c}} \right) \Ad_b y_i , \left( P_{\fr{b}} + P_{\fr{c}} \right) \Ad_b y_j \right] \\
= \left[ P_{\fr{c}} \Ad_b y_i , P_{\fr{c}} \Ad_b y_j \right] + P_{\fr{c}} \left[ P_{\fr{b}} \Ad_b y_i , P_{\fr{c}} \Ad_b y_j \right] + P_{\fr{c}} \mathopen{} \left[ P_{\fr{c}} \Ad_b y_i , P_{\fr{b}} \Ad_b y_j \right]
\end{multline*}
which we pair with $v$.

Then, using the invariance \eqref{eq:psi-i-y-i-invariance}, we can write
\begin{align*}
\left\langle v , \left[ P_{\fr{c}} \Ad_b y_i , P_{\fr{c}} \Ad_b y_j \right] \right\rangle \Ad^\vee_b \psi^i \wedge \Ad^\vee_b \psi^j &= \left\langle v , \left[ y_i , y_j \right] \right\rangle \psi^i \wedge \psi^j , \\
\left\langle v , \left[ P_{\fr{b}} \Ad_b y_i , P_{\fr{c}} \Ad_b y_j \right] \right\rangle \Ad^\vee_b \psi^i \wedge \Ad^\vee_b \psi^j &= \left\langle v , \left[ P_{\fr{b}} \Ad_b y_i , y_j \right] \right\rangle \Ad^\vee_b \psi^i \wedge \psi^j , \\
\left\langle v , \left[ P_{\fr{c}} \Ad_b y_i , P_{\fr{b}} \Ad_b y_j \right] \right\rangle \Ad^\vee_b \psi^i \wedge \Ad^\vee_b \psi^j &= \left\langle v , \left[ y_i , P_{\fr{b}} \Ad_b y_j \right] \right\rangle \psi^i \wedge \Ad^\vee_b \psi^j .
\end{align*}
Notice that the last two terms are the same after swapping the order of the bracket and the wedge product and renaming dummy indices.

Finally, by definition
\[
\left\langle v , \left[ P_{\fr{b}} \Ad_b y_i , y_j \right] \right\rangle = - \left\langle \ad^\vee (P_{\fr{b}} \Ad_b y_i) (v) , y_j \right\rangle ,
\]
so that we can now complete one of the summations to get that
\[
\left\langle v , \left[ P_{\fr{b}} \Ad_b y_i , y_j \right] \right\rangle \Ad^\vee_b \psi^i \wedge \psi^j = - \Ad^\vee_b \psi^i \wedge \ad^\vee ( P_{\fr{b}} \Ad_b y_i ) (v) .
\]
Combining these we obtain the claim.
\end{proof}

Now we are ready for the proof of our first main result.

\begin{proof}[Proof of Theorem \ref{thm:PLGS}]
By Lemma \ref{lem:Pi-corresp-to-eta}, it is enough to show that $\eta$ satisfies the cocycle condition
\[
\eta(g h) = \eta(g) + \left( \Ad^{E}_g \ox \Ad^{E}_g \right) \eta(h),
\]
or equivalently,
\[
\left( \Ad^{E}_g \ox \Ad^{E}_g \right) \eta(h) = \eta(g h) - \eta(g) .
\]
We proceed by computing the left-hand side for the two pieces $\eta_0$ and $\eta_{\fr{b}}$ separately, and then compare.
In the following we write $g = (v, b)$ and $h = (w, b^\pr)$.

First we expand $\left( \Ad^{E}_g \ox \Ad^{E}_g \right) \eta_0(h)$ as
\begin{multline*}
\frac{1}{2} \left\langle w , \Ad_{b^\pr} \mathopen{}\left[ y_i , y_j \right] \right\rangle \Ad^\vee_{b b^\pr} \psi^i \wedge \Ad^\vee_{b b^\pr} \psi^j 
= \frac{1}{2} \left\langle \Ad^\vee_b w , \Ad_{b b^\pr} \mathopen{} \left[ y_i , y_j \right] \right\rangle \Ad^\vee_{b b^\pr} \psi^i \wedge \Ad^\vee_{b b^\pr} \psi^j \\
= \frac{1}{2} \left\langle v + \Ad^\vee_b w , \Ad_{b b^\pr} \mathopen{} \left[ y_i , y_j \right] \right\rangle \Ad^\vee_{b b^\pr} \psi^i \wedge \Ad^\vee_{b b^\pr} \psi^j - \frac{1}{2} \left\langle v , \Ad_{b b^\pr} \mathopen{} \left[ y_i , y_j \right] \right\rangle \Ad^\vee_{b b^\pr} \psi^i \wedge \Ad^\vee_{b b^\pr} \psi^j .
\end{multline*}
We recognize the first term as $\eta_0(g h)$, so we focus on the second term.
If we apply our formula \eqref{eq:AEFLP} to it, we find
\[
\frac{1}{2} \left\langle v , \Ad_{b b^\pr} \mathopen{} \left[ y_i , y_j \right] \right\rangle \Ad^\vee_{b b^\pr} \psi^i \wedge \Ad^\vee_{b b^\pr} \psi^j = \frac{1}{2} \left\langle v , \left[ y_i , y_j \right] \right\rangle \psi^i \wedge \psi^j - \Ad_{b b^\pr}^* \psi^i \wedge \ad^\vee ( P_{\fr{b}}\Ad_{b b^\pr} y_i ) (v).
\]
The relation \eqref{eq:AEFLP} also implies that
\[
\frac{1}{2} \left\langle v , \left[ y_i , y_j \right] \right\rangle \psi^i \wedge \psi^j = \frac{1}{2} \left\langle v , \Ad_b \mathopen{} \left[ y_i , y_j \right] \right\rangle \Ad^\vee_b \psi^i \wedge \Ad^\vee_b \psi^j + \Ad^\vee_b \psi^i \wedge \ad^\vee ( P_{\fr{b}} \Ad_b y_i ) (v) .
\]
Combining the two yields
\begin{multline*}
\frac{1}{2} \left\langle v , \Ad_{b b^\pr} \mathopen{} \left[ y_i , y_j \right] \right\rangle \Ad^\vee_{b b^\pr} \psi^i \wedge \Ad^\vee_{b b^\pr} \psi^j =\\
 \frac{1}{2} \left\langle v , \Ad_b \mathopen{} \left[ y_i , y_j \right] \right\rangle \Ad^\vee_b \psi^i \wedge \Ad^\vee_b \psi^j + \Ad^\vee_b \psi^i \wedge \ad^\vee ( P_{\fr{b}} \Ad_b y_i ) (v)\\
  - \Ad^\vee_{b b^\pr} \psi^i \wedge \ad^\vee ( P_{\fr{b}} \Ad_{b b^\pr} y_i ) (v) .
\end{multline*}
The first term on the right-hand side is precisely $\eta_0(g)$, so we now have
\begin{multline}\label{eq:g-on-eta-0-h}
\left( \Ad^{E}_g \ox \Ad^{E}_g \right) \eta_0(h) =\\
 \eta_0(g h) - \eta_0(g) - \Ad^\vee_b \psi^i \wedge \ad^\vee ( P_{\fr{b}} \Ad_b y_i ) (v) + \Ad^\vee_{b b^\pr} \psi^i \wedge \ad^\vee ( P_{\fr{b}} \Ad_{b b^\pr} y_i ) (v) .
\end{multline}

Second, we see that $\left( \Ad^{E}_g \ox \Ad^{E}_g \right) \eta_{\fr{b}}(h) $ is equal to
\[
\Ad^\vee_{b b^\pr} \psi^i \wedge \left[ \Ad_b P_{\fr{b}} \Ad_{b^\pr} y_i - \ad^\vee ( \Ad_b P_{\fr{b}} \Ad_{b^\pr} y_i ) (v) \right]
\]
by \eqref{eq:ADEAZ} and \eqref{eq:ADEA}.
To obtain projections in the right places, write
\[
\Ad_b P_{\fr{b}} \Ad_{b^\pr} y_i = P_{\fr{b}} \Ad_b P_{\fr{b}} \Ad_{b^\pr} y_i = P_{\fr{b}} \Ad_b \mathopen{} \left( 1 - P_{\fr{c}} \right) \Ad_{b^\pr} y_i .
\]
We proceed by exploiting the duality again,
\[
\begin{split}
\Ad^\vee_{b b^\pr} \psi^i \wedge \left[ P_{\fr{b}} \Ad_{b b^\pr} y_i - P_{\fr{b}} \Ad_b P_{\fr{c}} \Ad_{b^\pr} y_i \right] &= \Ad^\vee_{b b^\pr} \psi^i \wedge P_{\fr{b}} \Ad_{b b^\pr} y_i - \Ad^\vee_b \psi^i \wedge P_{\fr{b}} \Ad_b y_i \\
&= \eta_{\fr{b}}(g h) - \eta_{\fr{b}}(g) .
\end{split}
\]
This implies
\[
\left( \Ad^{E}_g \ox \Ad^{E}_g \right) \eta_{\fr{b}}(h) = \eta_{\fr{b}}(g h) - \eta_{\fr{b}}(g) - \Ad^\vee_{b b^\pr} \psi^i \wedge \ad^\vee ( P_{\fr{b}} \Ad_{b b^\pr} y_i - P_{\fr{b}} \Ad_b P_{\fr{c}} \Ad_{b^\pr} y_i ) (v),
\]
and by invariance \eqref{eq:psi-i-y-i-invariance} we have
\begin{equation}\label{eq:g-on-eta-b-h}
\begin{split}
\left( \Ad^{E}_g \ox \Ad^{E}_g \right) \eta_{\fr{b}}(h)
&= \eta_{\fr{b}}(g h) - \eta_{\fr{b}}(g) \\
&\quad - \Ad^\vee_{b b^\pr} \psi^i \wedge \ad^\vee ( P_{\fr{b}} \Ad_{b b^\pr} y_i ) (v) + \Ad^\vee_b \psi^i \wedge \ad^\vee ( P_{\fr{b}} \Ad_b y_i ) (v) .
\end{split}
\end{equation}

Combining \eqref{eq:g-on-eta-0-h} and \eqref{eq:g-on-eta-b-h}, we find
\[
\begin{split}
\left( \Ad^{E}_g \ox \Ad^{E}_g \right) \left( \eta_0(h) + \eta_{\fr{b}}(h) \right) &= \eta_0(g h) - \eta_0(g) + \eta_{\fr{b}}(g h) - \eta_{\fr{b}}(g)\\
&\quad  + \Ad^\vee_{b b^\pr} \psi^i \wedge \ad^\vee ( P_{\fr{b}} \Ad_{b b^\pr} y_i ) (v)- \Ad^\vee_{b b^\pr} \psi^i \wedge \ad^\vee ( P_{\fr{b}} \Ad_{b b^\pr} y_i ) (v) \\
&\quad- \Ad^\vee_b \psi^i \wedge \ad^\vee ( P_{\fr{b}} \Ad_b y_i ) (v) + \Ad^\vee_b \psi^i \wedge \ad^\vee ( P_{\fr{b}} \Ad_b y_i ) (v) \\
&= \eta(g h) - \eta(g) ,
\end{split}
\]
as desired.
\end{proof}

\begin{rmk}
Let us sketch another argument to obtain the cocycle property of $\eta$.
We thank the reviewer for this observation.
Generally, suppose that $(\pi, V)$ is a linear representation of a Lie group $G'$, and further suppose we have a linear representation of $G'$ on $V \oplus V^*$ by operators of the form
\[
\begin{bmatrix}
 \pi_g & \xi'(g)\\
 0 & \pi^\vee_g
\end{bmatrix} \quad (g \in G')
\]
for some function $\xi'$ from $G'$ to $\Lin(V^*, V)$, the space of linear maps from $V^*$ to $V$.
Then
\[
\xi'(g h) = \pi_g \xi'(h) + \xi'(g) \pi^\vee_h
\]
implies that the function $\xi''(g) = \xi'(g) (\pi^{\vee}_g)^{-1}$ is a $1$-cocycle for the adjoint representation of $G'$ on $\Lin(V^*, V)$.
Identifying $\Lin(V^*, V)$ with $V \otimes V = \s T^2 V$ and then composing $\xi''$ with the equivariant projection to the $G'$-invariant summand $\medwedge^2 V$, we obtain a $1$-cocycle $\xi$ with values in $\medwedge^2 V$.
Up to rescaling, our construction corresponds to $V = \fr{b}^0 \oplus \fr{b}$ (with $V^*$ identified with $\fr{c} \oplus \fr{c}^0$) and $G' = E$.
To be more precise, $V \oplus V^*$ is identified with $\fr{g}^* \oplus \fr{g}$ up to rearranging summands in
\[
V \oplus V^* = \fr{b}^0 \oplus \fr{b} \oplus \fr{c} \oplus \fr{c}^0.
\]
On $\fr{g}^* \oplus \fr{g}$ one has the adjoint representation of $G'' = \fr{g}^* \rtimes G$, and its restriction on $E = \fr{b}^0 \rtimes B$ has the desired triangular form, since this action of $G''$ restricts to the adjoint representation of $E$ on $V$.
We then obtain $-2 \xi = \eta$ by a direct computation.
\end{rmk}

\subsection{Alternative Proof of Theorem \ref{thm:PLGS} for Double Lie Groups}

It was pointed out to us by P.\ Stachura that Theorem \ref{thm:PLGS} can also be derived from work of Zakrzewski \cite{MR1081011} when the matched pair decomposition is global.
This situation is referred to as a \emph{double Lie group}.
We will freely use the terminology and notation of \cite{MR1764452}.
Now we give a sketch of the argument.

Let $G = BC$ be a double Lie group and consider the groupoids $\s{G}_B$ and $\s{G}_C = \s{G}_{G,C,B}$.
Write $m_C$ for the multiplication relation of $\s{G}_C$.
We begin by taking its transpose, which gives the relation
\begin{equation*}
	m_C^T (g) = \left\{ \left. (b_1 c , c b_2) \right| b_1,b_2 \in B, c \in C, b_1 c b_2 = g \right\} .
\end{equation*}
This can be viewed as a relation $m_C^T \colon \s{G}_B \rightarrowtriangle \s{G}_B \times \s{G}_B$ and turns out to be a (Zakrzewski) morphism of Lie groupoids.

We now apply the phase lift functor to $\s{G}_B$ and the morphism $m_C^T$.
The \emph{phase lift} of a differentiable relation $r \colon X \rightarrowtriangle Y$ is a new relation $\rP r \colon \rT^* X \rightarrowtriangle \rT^* Y$ with graph
\begin{equation*}
	(\alpha,\beta) \in \Gr(\rP r) \Leftrightarrow \forall (u, v) \in T_{(\pi_Y(\alpha),\pi_X(\beta))} \Gr(r) \colon: \langle \alpha , u \rangle = \langle \beta , v \rangle
\end{equation*}
The resulting groupoid $\rP \s{G}_B$ has as arrow space the total space of the cotangent bundle $\rT^* \s{G}_B$.
However, the base space is not $\rT^* B$, as one might expect.
The unit relation $e \colon \{ 1 \} \rightarrowtriangle \s{G}_B$ becomes a relation $\rP e \colon \{ 1 \} \times \{ 0 \} \rightarrowtriangle \rT^* \s{G}_B$.
Because the original base space was $B$, it follows that the conditions for $(g,X^*) \in \rT^* \s{G}_B$ to lie in $\Gr(\rP e)$ are that $g \in B$ and that $X^* \in \fr{b}^0$, as the fiber of the tangent and cotangent bundle of $\{ 1 \}$ is the zero vector space.

We now claim that the base map of the phase lift $\rP m_C^T$ is precisely the group operation of $(\rT B)^0$.
The graph of the base map is the transpose of the intersection
\begin{equation*}
	\Gr( \rP m_C^T ) \cap (\rT B)^0 \times (\rT B)^0 \times (\rT B)^0 \subset \bigl( \rT^* \s{G}_B \times \rT^* \s{G}_B \bigr) \times \rT^* \s{G}_B .
\end{equation*}
Let $\bigl( (b_1, \psi_1) , (b_2, \psi_2) , (b, \psi) \bigr) \in (\rT B)^0 \times (\rT B)^0 \times (\rT B)^0$.
Note that $(b_1,b_2,b)$ lies in the graph of $m_C^T$ if and only if $b = b_1 b_2$.
We describe the tangent space to $\Gr( m_C^T)$ at $(b_1,b_2,b_1 b_2)$.
Any tangent vector in the direction of $B$ is not important, because $\psi_1,\psi_2,\psi$ are from $\fr{b}^0$.
Take $X \in \fr{c}$ instead, then we get a curve
\begin{equation*}
\left( b_1 e^{t X}, e^{t X} b_2, b_1 e^{t X} b_2 \right) = \left( \left( b_1 e^{t X} b_1^{-1} \right) b_1, e^{t X} b_2, \left( b_1 e^{t X} b_1^{-1} \right) b_1 b_2 \right) .
\end{equation*}
Hence we get the tangent vectors $( \Ad_{b_1} (X) , X , \Ad_{b_1} (X) )$, or equivalently $( Y , \Ad_{b_1^{-1}} (Y) , Y )$.
Plugging this into the equation defining the graph of $\rP m_C^T$, we find the condition that
\[
 \psi(Y) = \psi_1(Y) + \psi_2( \Ad_{b_1^{-1}} (Y) ) = \left( \psi_1 + \Ad^\vee_{b_1} \psi_2 \right) (Y) .
\]
Therefore the base map of $\rP m_C^T$ is
\begin{equation*}
\left( (b_1, \psi_1) , (b_2, \psi_2) \right) \mapsto \left( b_1 b_2 , \psi_1 + \Ad^\vee_{b_1} \psi_2 \right) ,
\end{equation*}
as claimed.

Then the phase lift of a Zakrzewski morphism produces a morphism of symplectic groupoids and that the base map of such a morphism is always a Poisson map, see for example \citelist{\cite{MR996653}*{Chapitre II}\cite{MR1081011}*{Section 5}}.

\subsection{Example: the \texorpdfstring{$\rE(2)$}{E(2)} Group from \texorpdfstring{$\SU(1,1)$}{SU(1,1)}}
\label{sec:exmpl-e2-from-su11}

Let us take $G = \SU(1, 1)$, and its Iwasawa decomposition $G = K A N$ with subgroups
\begin{gather*}
\begin{align*}
K &= U(1) = \left\{ \begin{pmatrix} e^{i \vph} & 0 \\ 0 & e^{-i \vph}\end{pmatrix} \Biggm| \vph \in \R \right\},&
A &=  \left\{ \begin{pmatrix} \cosh(t) & \sinh(t) \\ \sinh(t) & \cosh(t) \end{pmatrix} \Biggm| t \in \R \right\},
\end{align*}\\
N = \left\{ \begin{pmatrix} 1 + i s &  - i s \\ i s & 1 - i s \end{pmatrix} \Biggm| s \in \R \right\}.
\end{gather*}
Take $B = K$ and $C = A N$ as matched pair in $G$.
In this case we get $\fr{b}^0 \simeq \C$, and the group $E$ is the semidirect product $\C \rtimes U(1)$ with the product
\[
(z,e^{i\vph})(w,e^{i\psi}) = (z + e^{2i\vph}w,e^{i(\vph+\psi)}).
\]
Denote the semidirect product $\R^2\rtimes \SO(2)$ for the natural rotation action of $\SO(2)$ on $\R^2$ by $\rE^+(2)$, which is the `positive' part of the $2$-dimensional Euclidean group.
Now Consider the two-fold cover of $\rE^+(2)$ given by the matrix group
\[
\rE(2) = \left\{ 
\begin{pmatrix}
v & n \\ 0 & v^{-1}
\end{pmatrix}
\Biggm| v \in \bb{T} , ~ n \in \C \right\} ,
\]
which acts on $\zeta\in\C$ as $\zeta \mapsto v^2\zeta + v n$.
We have an isomorphism $E \simeq \rE(2)$ through the identifications $v = e^{i\vph}$ and $n = e^{-i\vph}z$.
This isomorphism also matches up the actions of the groups on $\C$.
As we will see below, the Poisson--Lie group structure on $\rE(2)$ obtained by our scheme agrees (up to double covering) with the one considered in \cite{MR1267935}.

Write the generators of $K$, $A$, $N$ above as
\begin{align*}
i h &= \begin{pmatrix}
i & 0\\
0 & -i
\end{pmatrix},&
y^{(a)} &= \begin{pmatrix}
0 & 1\\
1 & 0
\end{pmatrix},&
y^{(2)} &= \begin{pmatrix}
i & -i\\
i & -i
\end{pmatrix}.
\end{align*}
We realize $\fr{g}^*$ as a subspace of $\fr{sl}_2(\C)$ by
\[
\fr{g}^* = \left\{ 
\begin{pmatrix}
a & z\\
0 & -a
\end{pmatrix}
\Biggm|
a \in \R, z \in \C
 \right\}
\]
compatible with the natural duality pairing
\[
\langle x, y \rangle = \Im \Tr(x y).
\]
Then $\fr{b}^0$ is spanned by
\begin{align*}
y^{(a)\ast} &= 
\begin{pmatrix}
0 & i\\
0 & 0
\end{pmatrix},&
y^{(2)\ast} = 
\begin{pmatrix}
0 & 1\\
0 & 0
\end{pmatrix}.
\end{align*}
To help comparison with \cite{MR1267935}, let us write our basis of $\fr{e}$ as
\begin{align*}
P_1 &= y^{(a)\ast},&
P_2 &= y^{(2)\ast},&
J = i h.
\end{align*}
Their relations are given by
\begin{align*}
\left[ P_1 , P_2 \right] &= 0,&
\left[ J , P_1 \right] &= 2 P_2,&
\left[ J , P_2 \right] &= - 2 P_1.
\end{align*}

Let us compute the bracket on $\fr{e}^*$ induced by the Poisson bracket coming from the Lie groupoid.
As before we denote by $X^L_y$ the sections of $\s{L}(\s{G}_B)$ corresponding to $y \in \fr{c}$.
We will also denote the sections coming from $y^{(a)}$ and $y^{(2)}$ by $X^L_{(a)}$ and $X^L_{(2)}$.
Furthermore, we denote the corresponding fiber-wise linear functions on $E$ by $\widetilde{X^L_{y}}$, etc.
Concretely, we have
\begin{align*}
\widetilde{X^L_{(a)}} (P_1,e^{i\vph}) &= \langle P_1, \Ad_{e^{i\vph}} y^{(a)} \rangle = \cos(2\vph), &
\widetilde{X^L_{(a)}} (P_2,e^{i\vph}) &= \sin(2\vph),\\
\widetilde{X^L_{(2)}} (P_1,e^{i\vph}) &= -\sin(2\vph), &
\widetilde{X^L_{(2)}} (P_2,e^{i\vph}) &= \cos(2\vph) .
\end{align*}
So these two functions can and will be used as coordinate functions $p_1$ and $p_2$ on the `linear part' of $E$.
On the $U(1)$ part of $E$, we have the function
\[
\mathrm{U}(1) \to \C, \quad  \begin{pmatrix}
 e^{i\vph} & 0 \\ 0 & e^{-i\vph} 
\end{pmatrix}
 \mapsto e^{i\vph},
\]
which we denote by $e^{i \vph}$ again.

The anchor map $\sm{a} \colon \s{L}( \s{G}_K ) \rightarrow \rT K = \fr{k} \times K$ becomes
\[
\sm{a}( X^L_y ) (e^{i\vph}) = (P_{\fr{k}} \mathopen{} \left( \mathrm{Ad}_{e^{i\vph}} y \right), e^{i\vph}),
\]
where $P_{\fr{k}}$ is the projection $\fr{g} \to \fr{k}$ corresponding to the decomposition $\fr{g} = \fr{k} \oplus (\fr{a} \oplus \fr{n})$.
For our basis above, we get
\begin{align*}
\sm{a}( X^L_{(a)} ) (e^{i\vph}) &= (\sin(2\vph) J, e^{i \vph}),&
\sm{a}( X^L_{(2)} ) (e^{i\vph}) &= (( 1 - \cos(2\vph)) J, e^{i \vph}).
\end{align*}

We can now compute the Poisson structure.
The bracket on the linear part is given by
\[
\left\{\widetilde{X^L_{(a)}} , \widetilde{X^L_{(2)}} \right\} = \widetilde{X^L_{[y^{(a)}, y^{(2)}]}} = 2 \widetilde{X^L_{(2)}}.
\]
Between the linear part and the base part, we have
\begin{align*}
\left\{ \widetilde{X^L_{(a)}} , e^{i\vph} \right\} &= \sm{a} (X^L_{(a)}) e^{i\vph} = \sin(2\vph) J^R e^{i\vph} = i \sin(2\vph) e^{i\vph},&
\left\{ \widetilde{X^L_{(2)}} , e^{i\vph} \right\} &= i \mathopen{} \left( 1 - \cos(2\vph) \right) e^{i\vph} .
\end{align*}

To compare with \cite{MR1267935}, we pass to the quotient $E/\bb{Z}_2 \simeq \rE^+(2)$.
This identification is given by $(z,e^{i\vph}) \mapsto (z,e^{i2\vph})$, so that the linear functions are unchanged, but the function $e^{i\vth}$ on $\rE^+(2)$ corresponds to $e^{i2\vph}$ on $E$.
As
\begin{align*}
\left\{ \widetilde{X^L_{(a)}} , e^{i2\vph} \right\} &= 2 e^{i\vph} \mathopen{} \left\{ \widetilde{X^L_{(a)}} , e^{i\vph} \right\} = 2 i \sin(2\vph) e^{i2\vph} ,&
\left\{ \widetilde{X^L_{(2)}} , e^{i2\vph} \right\} &= 2 i \left( 1 -\cos(2\vph) \right) e^{i2\vph} ,
\end{align*}
we obtain the following Poisson bracket on $\rE^+(2)$:
\begin{align*}
\{ \widetilde{X^L_{(a)}}, \widetilde{X^L_{(2)}} \} &= 2 \widetilde{X^L_{(2)}} , &
\left\{ \widetilde{X^L_{(a)}} , e^{i\vth} \right\} &= 2 i \sin(\vth) e^{i\vth} ,&
\left\{ \widetilde{X^L_{(2)}} , e^{i\vth} \right\} &= 2 i \left( 1 - \cos(\vth) \right) e^{i\vth} .
\end{align*}
Then, putting
\begin{align*}
V^1 &= -\widetilde{X^L_{(a)}},&
V^2 &=\widetilde{X^L_{(2)}},
\end{align*}
we obtain the bracket \cite{MR1267935}*{Equation (10)} for $\omega = -2$.

In \cite{MR1096123}*{Section 3}, Woronowicz constructed the quantum group $E_q(2)$ starting with the model E$(2)$ for the group of Euclidean motions of the plane.
Note that the Poisson--Lie structure obtained here is different from the one behind Woronowicz's example.
In the notation of \cite{MR1399267}, what we obtained is a scaled version of the cobracket $\delta_3$ on $\fr{e}(2)$, while Woronowicz's $E_q(2)$ corresponds to the family $\delta_1$.
Nevertheless, there is still some similarity between the two.
If we pass to the dual $\fr{e}(2)^*$, with basis $\{J^*,P_1^*,P_2^*\}$, then the Lie brackets dualizing $\delta_1$ and $\delta_3$ yield isomorphic Lie algebras.
Indeed, $\delta_1$ gives
\begin{align}
\label{eq:dual-bracket-for-delta-1}
\left[ P_1^* , P_2^* \right]_1 &= 0 ,&
\left[ P_1^* , J^* \right]_1 &= s P_1^* ,&
\left[ P_2^* , J^* \right]_1 &= s P_2^*,
\end{align}
where $s$ is an auxiliary parameter, while $\delta_3$ gives
\begin{align*}
\left[ P_1^* , P_2^* \right]_3 &= P_2^* ,&
\left[ P_1^* , J^* \right]_3 &= J^* ,&
 \left[ P_2^* , J^* \right]_3 &= 0 .
\end{align*}
So the linear isomorphism $\rho\colon \fr{e}(2)^* \rightarrow \fr{e}(2)^*$ given by
\begin{align*}
\rho(J^*) &= - s P_1^*,&
\rho(P_1^*) &= J^*,&
\rho(P_2^*) &= P_2^*
\end{align*}
satisfies $\rho \circ [\cdot,\cdot]_1 = [\rho(\cdot),\rho(\cdot)]_3$.

Let us also note that this Lie algebra structure on $\fr{e}(2)^*$ is isomorphic to the standard Lie algebra structure on $\fr{su}(1,1)^* = \fr{su}(2)^*$.
The latter is spanned by $h$, $e$, and $i e$ as a real Lie subalgebra of $\fr{sl}_2(\C)$, which is indeed isomorphic to $\fr e(2)^*$ with bracket \eqref{eq:dual-bracket-for-delta-1}.
In particular, dualizing the Lie bialgebras $(\fr{e}(2), \delta_1)$, $(\fr{e}(2), \delta_3)$, and $\fr{su}(1,1)$, we get different Lie bialgebra structures on $\fr{e}(2)^*$.
The ones from $\delta_1$ and $\fr{su}(1,1)$ are cohomologous as we will see in Section \ref{sec:cob-lie-bialg-from-real-simple}, while the one from $\delta_3$ has a different class in cohomology.

\subsection{Deformation Quantization}

Let us explain an analogue of strict deformation quantization, in the framework of unbounded multipliers.
The bounded picture, that is, a C$^*$-algebraic strict deformation quantization in the sense of Rieffel \cite{MR1292010}, is already provided in \cite{MR1722129}.

Let $U(\fr{c})$ be the complexified universal enveloping algebra of $\fr{c}$, i.e., the universal associative unital $\C$-algebra generated by a copy of $\fr{c}$ as its real subspace, with relations $x y - y x = [x, y]_{\fr{c}}$ for $x, y \in \fr{c}$.
The Hopf algebra $U(\fr{c})$ acts on $C^\infty_b(B)$, and the elements of the crossed product $U(\fr{c}) \ltimes C^\infty_b(B)$ can be regarded as unbounded multipliers of $C^*(\s G_B)$, see Appendix \ref{appdix:grpd-C-star-alg}.

Since $\Sym(\fr{c})$ can be identified with the space of complex polynomial functions on $\fr{b}^0$, the elements of $\Sym(\fr{c}) \otimes C^\infty_b(B)$ can be regarded as unbounded multipliers of $C_0(E)$.
Let us consider the corresponding bracket on $\Sym(\fr{c}) \otimes C^\infty_b(B)$.

By choosing an ordered basis of $\fr{c}$, by the same argument as the Poincaré--Birkhoff--Witt Theorem, we get a linear isomorphism
\[
Q\colon \Sym(\fr{c}) \otimes C^\infty_b(B) \to U(\fr{c}) \ltimes C^\infty_b(B)
\]
which is compatible with the filtrations by degree in $\fr{c}$.
For an auxiliary parameter $h$, define $Q_h\colon \Sym(\fr{c}) \otimes C^\infty_b(B) \to U(\fr{c}) \ltimes C^\infty_b(B)$ by
\[
Q_h(y_1 \dots y_k \otimes f) = h^k Q(y_1 \dots y_k \otimes f).
\]

\begin{prop}
We have
\[
\frac{[Q_h(T \otimes f), Q_h(T' \otimes f')]}{h} = Q_h(\{T \otimes f, T' \otimes f'\}) + O(h)
\]
for $T, T' \in \Sym(\fr{c})$ and $f, f' \in C^\infty_b(B)$.
\end{prop}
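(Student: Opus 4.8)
The plan is to recognise the crossed product $\mathcal U := U(\fr c) \ltimes C^\infty_b(B)$ as a filtered algebra — concretely, as the universal enveloping algebra of the Lie--Rinehart algebra $\Gamma(\s{L}(\s{G}_B))$, in which the constant sections $X^L_y$ span a Lie subalgebra isomorphic to $\fr c$ — whose associated graded is $\mathcal A := \Sym(\fr c) \otimes C^\infty_b(B)$, and then to show that the rescaled commutator recovers the symbol map applied to the Poisson bracket. Throughout I identify $\mathcal A$ with the fibrewise-polynomial functions on $E$ via the symbol map, so that $y \in \fr c$ corresponds to $\widetilde{X^L_y}$ and $Q$ is the symmetrisation (PBW) splitting. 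Under this identification the three commutation relations in $\mathcal U$, namely $[y, y'] = [y, y']_{\fr c}$, $[y, f] = \sm{a}(X^L_y) f$ and $[f, f'] = 0$ for $y, y' \in \fr c$ and $f, f' \in C^\infty_b(B)$, are exactly the three Poisson relations \eqref{eq:LGPB}.

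The heart of the argument is the identity of the principal symbol with the Poisson bracket: for $a \in \mathcal A$ homogeneous of $\fr c$-degree $k$ and $b$ of degree $l$, I claim $[Q(a), Q(b)] \in \mathcal U_{\le k+l-1}$ and that its image $\sigma_{k+l-1}([Q(a),Q(b)])$ in $\mathcal U_{\le k+l-1} / \mathcal U_{\le k+l-2} \cong \Sym^{k+l-1}(\fr c) \otimes C^\infty_b(B)$ equals $\{a, b\}$. To prove this I would observe that both $(a,b) \mapsto \sigma_{k+l-1}([Q(a), Q(b)])$ and $(a,b) \mapsto \{a, b\}$ are biderivations for the commutative product on $\mathcal A$ — the former because $[\mathcal U_{\le k}, \mathcal U_{\le l}] \subset \mathcal U_{\le k+l-1}$ makes $\mathrm{gr}\,\mathcal U$ a Poisson algebra, the latter by definition. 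Since $\mathcal A$ is generated as a commutative algebra by $\fr c$ and $C^\infty_b(B)$, and the two biderivations agree on these generators by the matching of relations just noted, they agree everywhere; here one also checks, using the Leibniz rule and the fact that the three generating brackets each drop total degree by exactly one, that $\{a, b\}$ is genuinely homogeneous of degree $k+l-1$, so that comparing in a single graded piece is legitimate. Conceptually this is Rinehart's Poincaré--Birkhoff--Witt theorem for the Lie--Rinehart algebra at hand.

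With the symbol identity in place the remainder is bookkeeping in powers of $h$. For homogeneous $a, b$ as above, write $c = Q^{-1}([Q(a), Q(b)]) = \sum_{j \le k+l-1} c_j$ with $c_j \in \Sym^j(\fr c) \otimes C^\infty_b(B)$. Since $Q_h$ multiplies the degree-$j$ part by $h^j$, we have $Q_h(a) = h^k Q(a)$ and $Q_h(b) = h^l Q(b)$, whence
\[
\frac{[Q_h(a), Q_h(b)]}{h} = h^{k+l-1} Q(c) = \sum_{j \le k+l-1} h^{k+l-1-j} Q_h(c_j).
\]
The top term $j = k+l-1$ equals $Q_h(c_{k+l-1}) = Q_h(\{a, b\})$ by the symbol identity, while every lower term carries a factor $h^{k+l-1-j}$ with exponent at least $1$ and is thus $O(h)$. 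Extending to general $T, T'$ by linearity over their finitely many homogeneous components gives the claim, with $O(h)$ read in the strict algebraic sense: for fixed $T, T', f, f'$ the error is a fixed element of $\mathcal U$ times a polynomial in $h$ with no constant term.

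The main obstacle is the symbol identity of the second paragraph. One must confirm that $\mathcal U$ really is the enveloping algebra of $\Gamma(\s{L}(\s{G}_B))$ — so that the PBW filtration behaves as asserted and $Q$ splits it — and, hand in hand with this, that the identification of $\Sym(\fr c)$ with functions on $\fr b^0$ used in the statement is the symbol identification $y \mapsto \widetilde{X^L_y}$ rather than the naive fibrewise duality, from which it differs by the $a$-dependent change of fibre coordinates $P_{\fr c}\Ad_a$. Once these identifications are pinned down, matching the three commutation relations with \eqref{eq:LGPB} and running the biderivation comparison is routine.
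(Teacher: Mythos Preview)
Your proposal is correct and follows essentially the same route as the paper: both arguments verify the claim on the generators $y \in \fr c$ and $f \in C^\infty_b(B)$ (where equality is exact) and then extend to higher degree by the Leibniz rule, which is what the paper's terse ``induction on degree'' amounts to and what your biderivation comparison makes explicit. Your Lie--Rinehart/PBW framing and the explicit $h$-bookkeeping are a more conceptual packaging of the same mechanism, and you are right to flag the identification issue: the paper's setup paragraph suggests the naive duality $y \mapsto \langle \cdot, y\rangle$, but the proof itself declares ``$y \otimes 1$ corresponds to the function $\widetilde{X^L_y}$'', which is exactly the symbol identification you single out as the one that makes the generator check go through.
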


\begin{proof}
For linear polynomials, we have
\[
\frac{[Q_h(y \otimes 1), Q_h(y' \otimes 1)]}{h} = Q_h(\{y \otimes 1, y' \otimes 1\})\quad (y, y' \in \fr{c})
\]
with exact equality, from the structure of $U(\fr{c})$.
($y \otimes 1$ corresponds to the function $\widetilde{X^L_y}$.)
For general elements $T, T' \in \Sym(\fr{c})$, induction on degree gives
\[
\frac{[Q_h(T \otimes 1), Q_h(T' \otimes 1)]}{h} = Q_h(\{T \otimes 1, T' \otimes 1\}) + O(h^2).
\]

Between $\fr{c}$ and $C^\infty_b(B)$, we again have
\[
\frac{[Q_h(y \otimes 1), Q_h(1 \otimes f)]}{h} = Q_h(\{y \otimes 1, 1 \otimes f\})\quad(y \in \fr{c}, f \in C^\infty_b(B))
\]
with exact equality, from the way the anchor map is defined.
($1 \otimes f$ corresponds to the function $\pi^* f$.)
For general $T \in \Sym(\fr{c})$, again by induction on degree we get
\[
\frac{[Q_h(T \otimes 1), Q_h(1 \otimes f)]}{h} = Q_h(\{T \otimes 1, 1 \otimes f\}) + O(h).
\]

Finally, we also have $[Q_h(1 \otimes f), Q_h(1 \otimes f')] = 0$.
\end{proof}

There is a structure of a Hopf algebra (up to completion) on $U(\fr{c}) \ltimes C^\infty_b(B)$ corresponding to the bicrossed product structure, as follows.
On $C^\infty_b(B)$ we consider the one coming from the group structure of $B$, implemented as
\[
\Delta\colon C^\infty_b(B) \to C^\infty_b(B \times B) \subset \s{M}(C_0(B) \otimes C_0(B)).
\]
(We can also take $C^\infty_b(B)$ as the domain.)
We mix this with the usual cocommutative coproduct $\Delta \colon U(\fr{c}) \to U(\fr{c}) \otimes U(\fr{c})$, using the action of $B$ on $\fr{c}$, as follows.

Note that $U(\fr{c}) \otimes C^\infty(B \times B)$ is identified with the space of smooth functions $B \times B \to U(\fr{c})$ with finite dimensional images.
For $T \otimes f$ with $T \in U(\fr{c})$ and $f \in C^\infty(B \times B)$, denote by $\theta(T \otimes f)$ the function $(g, h) \mapsto T^{g} f(g, h)$, where $T^g$ is the \emph{right} action of $B$ on $U(\fr{c})$ induced by the right action on $\fr{c}$.
Since $U(\fr{c})$ is the increasing union of finite dimensional representations of $B$, $\theta$ is well-defined as a transform on $U(\fr{c}) \otimes C^\infty(B \times B)$.
We then put
\[
\Delta(T \otimes f) = (T_{(1)} \otimes \theta(T_{(2)} \otimes \Delta(f)))_{1324} \quad (T \in U(\fr{c}), f \in C^\infty_b(B)),
\]
where $T_{(1)} \otimes T_{(2)}$ is the Sweedler notation for $\Delta(T)$.

If $B$ is compact, we can have a model of a genuine Hopf algebra by taking the algebra $\s{O}(B)$ of matrix coefficients of finite dimensional complex linear representations instead of $C^\infty(B)$.
Indeed, since $U(\fr{c})$ is a union of finite dimensional $B$-modules, the above $\Delta$ restricts to a coproduct map from $\s H = U(\fr c) \otimes \s O(B)$ to the algebraic tensor product $\s H \otimes \s H$.

\section{Coboundary Lie Bialgebras from Real Simple Lie Groups}
\label{sec:cob-lie-bialg-from-real-simple}

Throughout this section, let $G$ denote a connected real simple Lie group with finite center, and $K$ its maximal compact subgroup.
We further assume that $K$ has non-discrete center, which is equivalent to $\s{Z}(\fr{k})$ being $1$-dimensional.
This means that $G$ is, up to a finite cover, the identity component of the group of isometries on a noncompact irreducible Hermitian symmetric space \cite{MR1834454}.
For example, we could take $G = \SU(p,q)$ and $K = \mathrm{S}(\mathrm{U}(p)\times\mathrm{U}(q))$.

\subsection{Finding the \texorpdfstring{$r$}{r}-Matrix}

Denote the real Lie algebras of $G$ and $K$ by $\fr{g}$ and $\fr{k}$ respectively.
Let us take the Cartan decomposition for $K \subset G$, i.e., an orthogonal decomposition $\fr{g} = \fr{k} \oplus \fr{p}$ for the invariant symmetric bilinear form.
We note
\begin{align}
\label{eq:cobr-rel-for-Cartan-decomp}
[\fr{k},\fr{k}] &\subset \fr{k},&
[\fr{k},\fr{p}] &\subset \fr{p},&
[\fr{p},\fr{p}] &\subset \fr{k} .
\end{align}

Under our assumptions, $\s{Z}(\fr{k})$ must be $1$-dimensional, and we can pick a spanning element $z$ of $\s{Z}(\fr{k})$ satisfying
\begin{equation}
\label{eq:z-normalization}
\left. \ad(z)^2 \right|_{\fr{p}} = - 1,
\end{equation}
see for example \cite{MR1834454}*{Chapter VIII}.

The Iwasawa decomposition $G = K A N$ gives a matched pair of subgroups $K$ and $S = A N$ in $G$, and induces the decomposition $\fr{g} = \fr{k} \oplus \fr{s}$ as a vector space.
We denote the two projections associated to this decomposition by $P^I_{\fr{k}}$ and $P^I_{\fr{s}}$ respectively.
By the recipe of our main result, we get a Poisson--Lie group structure on $E = \fr{k}^0 \rtimes K$, or equivalently a Lie bialgebra structure on $\fr{e} = \fr{k}^0 \rtimes \fr{k}$.
As in \ref{lem:Pi-corresp-to-eta}, the Poisson--Lie structure can be described by the $1$-cocycle $\eta\colon E \rightarrow \fr{e} \ox \fr{e}$.
The cobracket $\delta$ on $\fr{e}$ is then given by differentiating $\eta$.
Let $\{y_i\}$ be a basis of $\fr{s}$ and $\{\psi^i\}$ be the dual basis inside $\fr{k}^0$.
Then, for $x\in \fr{k}$, we get
\begin{align*}
    \delta(x) &= \left.\dv{t}\right|_{t=0} \eta(0,e^{tx}) \\
    &= \left.\dv{t}\right|_{t=0} \sum_i \Ad^\vee_{e^{tx}} \psi^i \wedge P^I_{\fr{k}}\Ad_{e^{tx}} y_i \\
    &= \sum_i P^I_{\fr{k}} \left[ y_i , x \right] \wedge \psi^i ,
\end{align*}
where we have used the Leibniz rule and $P^I_{\fr{k}} y_i = 0$ at the last step.

Similarly, for $\psi \in \fr{k}^0$, $\delta(\psi)$ is an element of $\fr{k}^0 \ox \fr{k}^0$  characterized by 
\begin{equation*}
 \left\langle \delta(\psi) , y \ox y^\pr \right\rangle = \left\langle \psi , \left[ y , y^\pr \right] \right\rangle \quad(y, y^\pr \in \fr{s}).
\end{equation*}

In the following, we will use the notation
\begin{equation*}
    x.(y\ox z) = \left( \ad(x) \ox \iota + \iota \ox \ad(x) \right) (y\ox z) 
\end{equation*}
for $x, y, z \in \fr{e}$.

\begin{thm}
\label{thm:gen-e2-from-herm-simpl-cobdry}
Under the above setting, the Lie bialgebra structure on the Lie algebra $\fr{e}$ is coboundary.
More precisely, $r = z.\delta(z)$ satisfies $\delta(x) = [r, \Delta(x)]$ for all $x \in \fr{e}$.
\end{thm}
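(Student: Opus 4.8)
The plan is to compute the cobracket $\delta$ on $\fr e = \fr k^0 \oplus \fr k$ by linearising the multiplicative cocycle $\eta = \eta_0 + \eta_{\fr b}$ of Theorem \ref{thm:PLGS} (with $\fr b = \fr k$ and $\fr c = \fr s = \fr a \oplus \fr n$) at the unit of $E$, and then to verify directly that the stated $r = z.\delta(z)$ reproduces it. Differentiating $\eta$ along the curves $(t\psi, e)$ and $(0, \exp(tx))$ and using $P_{\fr k} y_i = 0$ for $y_i \in \fr c$, I expect the formulas
\begin{align*}
\delta(\psi) &= \tfrac{1}{2}\sum_{i,j}\langle \psi, [y_i, y_j]\rangle\, \psi^i \wedge \psi^j \quad (\psi \in \fr k^0), &
\delta(x) &= \sum_i \psi^i \wedge P_{\fr k}[x, y_i] \quad (x \in \fr k).
\end{align*}
Since $z$ is central in $\fr k$, the operator $\ad(z)$ annihilates the $\fr k$-leg $P_{\fr k}[z, y_i]$, so $r = z.\delta(z) = \sum_i \ad^*(z)\psi^i \wedge P_{\fr k}[z, y_i] \in \fr k^0 \wedge \fr k$. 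Writing $x.\omega = (\ad(x) \ox 1 + 1 \ox \ad(x))\omega$, the relation to be checked is $\delta(x) = [r, \Delta(x)] = -x.r$, and by linearity it suffices to treat $x \in \fr k$ and $x \in \fr k^0$ separately.

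The two structural facts I will isolate involve the Cartan involution $\theta$, so that $\tfrac{1+\theta}{2}$ and $\tfrac{1-\theta}{2}$ are the Cartan projections onto $\fr k$ and $\fr p$. First, because $z$ is central and $\ad(z)^2|_{\fr p} = -1$ by \eqref{eq:z-normalization}, one gets $\ad(z)^2|_{\fr c} = -\tfrac{1-\theta}{2}\big|_{\fr c}$. Second, since $\fr k = \ker P_{\fr c}$ and $\tfrac{1+\theta}{2}$ fixes $\fr k$ pointwise, for $y \in \fr c$ one has $P_{\fr c}\tfrac{1-\theta}{2}y = y$ and $P_{\fr k}\tfrac{1-\theta}{2}y = -\tfrac{1+\theta}{2}y$. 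Splitting $\ad(z)^2|_{\fr c}$ into its Iwasawa components then yields the two identities that drive everything,
\begin{align*}
\bigl(P_{\fr c}\ad(z)\bigr)^2 &= -\mathrm{id}_{\fr c}, &
P_{\fr k}\ad(z) \circ P_{\fr c}\ad(z) &= \tfrac{1+\theta}{2}\big|_{\fr c}.
\end{align*}

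For $x \in \fr k$ I would transport the computation to $\mathrm{Hom}(\fr c, \fr k)$ via $\fr k^0 \cong \fr c^*$: the element $\delta(x)$ corresponds to $y \mapsto P_{\fr k}[x, y]$, while expanding $[r, \Delta(x)] = -x.r$ gives the map $\ad(x)\circ(P_{\fr k}\ad(z)P_{\fr c}\ad(z)) - (P_{\fr k}\ad(z)P_{\fr c}\ad(z))\circ P_{\fr c}\ad(x)$. Substituting the second identity above and using $[\theta, \ad(x)] = 0$ for $x \in \fr k$, this collapses to $\tfrac{1+\theta}{2}\circ P_{\fr k}\ad(x)|_{\fr c} = P_{\fr k}\ad(x)|_{\fr c}$ (as $\tfrac{1+\theta}{2}$ fixes $\fr k$), which is exactly $\delta(x)$. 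For $x \in \fr k^0$ I would observe that both $\delta$ and $x \mapsto [r, \Delta(x)]$ are $1$-cocycles on $\fr e$ valued in $\bigwedge^2 \fr e$, so their agreement locus is a Lie subalgebra; having matched them on all of $\fr k$, and since $\fr k^0 \cong \fr p$ is an irreducible $\fr k$-module ($G/K$ being irreducible), it remains only to check a single nonzero $\psi_0 \in \fr k^0$, which reduces to the same bookkeeping (or to a direct evaluation in $\bigwedge^2 \fr k^0$ using the first identity $(\ad^*(z))^2 = -\mathrm{id}$ on $\fr k^0$).

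The main obstacle is precisely this projection bookkeeping: the cobracket entangles the Iwasawa splitting $\fr g = \fr k \oplus \fr c$ (through $P_{\fr k}$) with the Cartan splitting $\fr g = \fr k \oplus \fr p$ (through $z$, via $\ad(z)^2|_{\fr p} = -1$), and the heart of the argument is to show these two copies of $\fr k$ are compatibly aligned—the two displayed identities—after which the coboundary relation $\delta(x) = [r, \Delta(x)]$ follows formally.
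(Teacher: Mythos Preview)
Your key identities are correct and are exactly the content of the paper's Lemma~\ref{lem:PCk-is-PIk-PCp}: from $\ad(z)^2|_{\fr c}=-P^C_{\fr p}|_{\fr c}$ one indeed gets $(P_{\fr c}\ad(z))^2=-\mathrm{id}_{\fr c}$ and $P_{\fr k}\ad(z)\,P_{\fr c}\ad(z)=P^C_{\fr k}|_{\fr c}$, which is equivalent to the paper's reformulation $r=\sum_i P^C_{\fr k}y_i\wedge\psi^i$. So the structural core of your argument matches the paper's.

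For $x\in\fr k$ your direct computation via $\mathrm{Hom}(\fr c,\fr k)$ and $[\theta,\ad(x)]=0$ is valid, but the paper's argument is a two-liner and avoids all of it: since $z$ is central in $\fr k$ and $\delta$ is a cocycle, $0=\delta([x,z])=x.\delta(z)-z.\delta(x)$; and since $z.$ acts on $\delta(x)\in\fr k\wedge\fr k^0$ only through the $\fr k^0$-leg where $(\ad^*(z))^2=-\mathrm{id}$, one gets $\delta(x)=-z.z.\delta(x)=-z.x.\delta(z)=-x.(z.\delta(z))=[r,\Delta(x)]$. No projection bookkeeping is needed.

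For $\psi\in\fr k^0$ your proposal has a real gap. The irreducibility reduction is formally valid, but it purchases nothing: the identity $\delta(\psi)=[r,\Delta\psi]$ is linear in $\psi$, so verifying it for one $\psi_0$ is exactly the same computation as for a general $\psi$. Your alternative ``direct evaluation using $(\ad^*(z))^2=-\mathrm{id}$'' does not close either, because on $\largewedge^2\fr k^0$ the operator $(z.)^2$ is $-2+2\,\ad^*(z)\otimes\ad^*(z)$, not $-\mathrm{id}$. The missing ingredient is the Cartan relations \eqref{eq:cobr-rel-for-Cartan-decomp}: since $[\fr k,\fr k]$ and $[\fr p,\fr p]$ lie in $\fr k$ and $\psi$ kills $\fr k$, for $y,y'\in\fr c$ one has
\[
\langle\psi,[y,y']\rangle=\langle\psi,[P^C_{\fr k}y,y']-[P^C_{\fr k}y',y]\rangle,
\]
and expanding $P^C_{\fr k}y=\sum_i\psi^i(y)\,P^C_{\fr k}y_i$ this is precisely $\langle[r,\Delta\psi],y\otimes y'\rangle$. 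That is the whole $\fr k^0$-case, and it uses the Cartan bracket relations, not the complex structure $\ad(z)$, in an essential way.
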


\begin{proof}
Let us first check
\[
\delta(x) = \left[ r , \Delta(x) \right]
\]
for $x \in \fr{k}$.

On the one hand, we have
\[
z. z. \delta(x) = - \delta(x).
\]
Indeed, we can identify $\fr{k}^0$ with $\fr{p}$ as representations of $\fr{k}$ by the Killing form of $\fr{g}$.
Then the centrality of $z$ and the normalization condition \eqref{eq:z-normalization} implies this claim.

On the other, we have
\[
0 = \delta\mathopen{}\left( \left[ x,z \right] \right) = x.\delta(z) - z.\delta(x) \quad (x \in \fr k)
\]
by the centrality of $z$ and the cocycle condition for $\delta$.

Combining these two, we obtain
\[
\delta(x) = - z. z. \delta(x) = - x. z. \delta(z) = \left[ z.\delta(z) , \Delta(x) \right] .
\]

It remains to show that
\[
\delta(\psi) = \left[ r , \Delta(\psi) \right]
\]
for $\psi\in\fr{k}^0$.

Let us look at the Cartan decomposition $\fr g = \fr k \oplus \fr p$, and write the corresponding projections as $P^C_{\fr k}$ and $P^C_{\fr{p}}$.
Then For $y \in \fr s$, we have
\[
P^C_{\fr k} y = - P^I_{\fr k} P^C_{\fr p} y ,
\]
by the following general lemma.
\renewcommand{\qedsymbol}{}
\end{proof}

\begin{lem}
\label{lem:PCk-is-PIk-PCp}
Let $V$ be a vector space and suppose that we have decompositions $U \oplus W_1$ and $U \oplus W_2$.
Denote the corresponding projections by $P^1_U, P^1_{W_1}$ and $P^2_U, P^2_{W_2}$ respectively.
Then we have
\begin{equation*}
 P^2_U w = - P^1_U P^2_{W_2} w \quad (w \in W_1).
\end{equation*}
\end{lem}

\begin{proof}
We can expand $0 = P^1_U w$ as $P^1_U P^2_U w + P^1_U P^2_{W_2}$, which is equal to $P^2_U w + P^1_U P^2_{W_2} w$.
\end{proof}

\begin{proof}[Proof of Theorem \ref{thm:gen-e2-from-herm-simpl-cobdry}, continued]
We claim that our candidate for the $r$-matrix can be also written as
\begin{equation}
\label{eq:rmat}
r = \sum_i P^C_{\fr k} y_i \wedge \psi^i .
\end{equation}
By the centrality of $z$, we can write $r = z.\delta(z)$ as
\[
 z.\biggl(\sum_i P^I_{\fr k} [y_i, z] \wedge \psi^i\biggr) = \sum_i P^I_{\fr k} [y_i, z] \wedge \ad^\vee(z)(\psi^i).
\]
Then the $K$-invariance of the canonical tensor $\sum_i y_i \ox \psi^i$ as in \eqref{eq:psi-i-y-i-invariance} implies that the right hand side is equal to
\[
 -\sum_i P^I_{\fr k}[P^I_{\fr s} \ad(z)(y_i), z] \wedge \psi_i = \sum_i P^I_{\fr k} \ad(z)^2(y_i) \wedge \psi_i.
\]
Using \eqref{eq:z-normalization}, we have $\ad(z)^2(y) = -P^C_{\fr p} y$ for $y \in \fr s$.
Then Lemma \ref{lem:PCk-is-PIk-PCp} implies the claim.

Finally, notice that \eqref{eq:cobr-rel-for-Cartan-decomp} implies
\[
\psi\mathopen{}\left( \left[ y , y^\pr \right] \right) = \psi\mathopen{}\left( \left[ P^C_{\fr k} y , y^\pr \right] - \left[ P^C_{\fr k} y^\pr , y \right] \right)
\]
for $\psi \in \fr k^0$.
Therefore,
\[
\begin{split}
\left\langle \psi , \left[ y , y^\pr \right] \right\rangle &= \left\langle \psi , \left[ P^C_{\fr k} y , y^\pr \right] - \left[ P^C_{\fr k} y^\pr , y \right] \right\rangle \\
&= \sum_i \left\langle \psi , \left[ \psi^i(y) P^C_{\fr k} y_i , y^\pr \right] - \left[ \psi^i(y^\pr) P^C_{\fr k} y_i , y \right] \right\rangle \\
&= - \sum_i \left\langle \psi^i \ox \ad^\vee\mathopen{}\left( P^C_{\fr k} y_i \right)(\psi) , y \wedge y^\pr \right\rangle,
\end{split}
\]
but this is nothing but $\left\langle \left[ r , \Delta \psi \right] , y \ox y^\pr \right\rangle$ by \eqref{eq:rmat}.
\end{proof}

\begin{rmk}
We note that the above element $r$ is the only one satisfying $\delta(x) = [r, x]$ for al $x \in \fr{e}$.
Indeed, take any other potential $r$-matrix $r^\pr$, then the difference $r-r^\pr$ would be an invariant element of $\fr{k} \ox \fr{k}^0 \oplus \fr{k}^0 \ox \fr{k}$.
Testing against $\Delta(z)$ shows that no such elements exist except for $0$.
\end{rmk}

\begin{rmk}
While our argument relies on the centrality of $z$, the expression \eqref{eq:rmat} makes sense as an element of $\medwedge^2 \fr{e}$ in general, and in some cases still implements the cobracket.
For example, this is the case for $\fr g = \fr{so}(3,1)$ \cite{MR1280372}, see also a related phenomenon for the semidirect product of $\fr{so}(p,q)$ and $\R^{p + q}$ \citelist{\cite{MR1463043}\cite{arXiv:q-alg/9712040}}.
\end{rmk}

\begin{ex}
Starting from $\fr{g} = \fr{su}(1,1)$ and $\fr{k} = \fr{so}(2)$, we get $r = J \wedge P_2$ for the cobracket on $\fr{e}(2)$, cf.~\cite{MR1267935}.
\end{ex}

\subsection{Example: \texorpdfstring{$G = \SU(p,1)$}{G = SU(p, 1)}}

The Lie algebra of $\SU(p,1)$ for $p\geq 2$ is
\[
\fr{su}(p,1) = \left\{
\begin{pmatrix}
M & b \\
b^* & - \Tr[M]
\end{pmatrix}
\Biggm| M\in M_p(\C) , ~ M^* = - M , ~ b \in \C^p \right\} .
\]
We pick the maximal compact subalgebra
\[
\fr{k} = \left\{ 
\begin{pmatrix}
M & 0 \\
0 & - \Tr[M]
\end{pmatrix}
\Biggm| M\in M_p(\C) , ~ M^* = - M  \right\} ,
\]
which corresponds to the maximal compact subgroup S$(\mathrm{U}(p)\times\mathrm{U}(1))$ of $\SU(p,1)$.
We remark that the center of $\fr{k}$ is one-dimensional and spanned by
\[
z = \frac{1}{p+1} \begin{pmatrix}
i I_p & 0\\
0 & - p i
\end{pmatrix},
\]
where we have normalized such that $\ad^\vee(z)^2|_{\fr{k}^0} = - \iota$.
The associated Cartan decomposition is give by
\[
\fr{su}(p,1) = \fr{k} \oplus \fr{p} , ~ \fr{p} = \left\{
\begin{pmatrix}
0 & b \\
b^* & 0 \end{pmatrix}
\Biggm| b \in \C^p \right\} .
\]

The other decomposition we will use is the Iwasawa decomposition.
This begins with a choice of maximal commutative subalgebra $\fr{a}$ of $\fr{p}$.
We will work with
\[
\fr{a} = \left\{ t
\begin{pmatrix}
0 & e_p \\
e_p^* & 0
\end{pmatrix}
\Biggm| t\in\R \right\},
\]
where $e_p$ is the column vector of size $p$ with entries $0, \dots, 0, 1$.
In addition we have the positive restricted root spaces
\begin{align*}
\fr{g}_{f_1} &= \left\{
\begin{pmatrix}
0 & v & - v \\
- v^* & 0 & 0 \\
- v^* & 0 & 0 
\end{pmatrix}
\Biggm| v \in \C^{p-1} \right\} ,&
\fr{g}_{2f_1} &= \left\{ t
\begin{pmatrix}
0 & 0 & 0 \\
0 & i & -i \\
0 & i & -i \end{pmatrix}
\Biggm| t \in \R \right\} .
\end{align*}
These combine into the nilpotent part $\fr{n} = \fr{g}_{f_1} \oplus \fr{g}_{2f_1}$, which is normalized by $\fr{a}$.
Finally, this gives the solvable part of the Iwasawa decomposition,
\[
\fr{s} = \fr{a} \oplus \fr{n} .
\]

The complexification of $\fr{su}(p,1)$ is $\fr{sl}(p+1,\C)$, and its dual $\fr{su}(p,1)^*$ (with respect to the imaginary part of the Killing form) is given by the Lie algebra of upper triangular $(p+1)\times (p+1)$-matrices with real entries on the diagonal.
Then $\fr{k}^0  \subset \fr{su}(p,1)^*$ is given by
\[
\fr{k}^0 = \left\{ 
\begin{pmatrix}
0 & w \\ 0 & 0
\end{pmatrix}
\Biggm| w \in \C^p \right\},
\]
which is indeed a commutative subspace.

Let us take a basis of $\fr{s}$ as follows:
\begin{align*}
y^{(a)} &= \begin{pmatrix}
 0 & e_p \\ e_p^* & 0 
\end{pmatrix}
 \in \fr{a} ,&
y^{(2)} &= \begin{pmatrix}
 0 & 0 & 0 \\ 0 & i & -i \\ 0 & i & -i 
\end{pmatrix}
 \in \fr{g}_{2f_1} , \\
y^{(R)}_k &= \begin{pmatrix}
 0 & - e_k & e_k \\ e_k^* & 0 & 0 \\ e_k^* & 0 & 0 
\end{pmatrix}
 \in \fr{g}_{f_1},&
y^{(I)}_k &= \begin{pmatrix}
 0 & i e_k & - i e_k \\ i e_k^* & 0 & 0 \\ i e_k^* & 0 & 0 
\end{pmatrix}
 \in \fr{g}_{f_1} \quad (k = 1 , \dots , p -1).
\end{align*}
Their brackets are given by
\begin{align*}
[y^{(a)}, y^{(2)}] &= 2 y^{(2)}, &
[y^{(a)}, y^{(R)}_k] &= y^{(R)}_k,&
[y^{(a)}, y^{(I)}_k] &= y^{(I)}_k,\\
\left[ y^{(R)}_k , y^{(R)}_\ell \right] &= 0 , &
\left[ y^{(I)}_k , y^{(I)}_\ell \right] &= 0 , &
\left[ y^{(R)}_k , y^{(I)}_\ell \right] &= 2 \delta_{k\ell} y^{(2)},
\end{align*}
while all other brackets involving $y^{(2)}$ vanish.

The dual basis in $\fr k^0$ is given by
\begin{align*}
y^{(a)\ast} &= \begin{pmatrix}
 0 & i e_p \\ 0 & 0 
\end{pmatrix}
 , &
y^{(2)\ast} &= \begin{pmatrix}
 0 & e_p \\ 0 & 0 
\end{pmatrix}
 , \\
y^{(R)\ast}_k &= \begin{pmatrix}
 0 & i e_k \\ 0 & 0 
\end{pmatrix}
 , &
y^{(I)\ast}_k &= \begin{pmatrix}
 0 & e_k \\ 0 & 0 
\end{pmatrix}
 \quad (k = 1 , \dots , p -1).
\end{align*}
From above computation, we see that the induced cobracket $\delta$ on $\fr{k}^0$ is given by
\begin{align*}
\delta\mathopen{}\left( y^{(a)\ast} \right) &= 0 , &
\delta\mathopen{}\left( y^{(\epsilon)\ast}_k \right) &= y^{(a)\ast} \wedge y^{(\epsilon)\ast}_k , &
\delta\mathopen{}\left( y^{(2)\ast} \right) &= y^{(a)\ast} \wedge y^{(2)\ast} + 2 \sum_{k=1}^{p-1} y^{(R)\ast}_k \wedge y^{(I)\ast}_k,
\end{align*}
where $\epsilon$ denotes $R$ or $I$ in the second relation.
From \eqref{eq:rmat}, we see that the $r$-matrix is given by
\[
\begin{split}
r &= P^C_{\fr k} y^{(2)} \wedge y^{(2)\ast} + \sum_{k=1}^{p-1} \left[ P^C_{\fr k} y^{(R)}_k \wedge y^{(R)\ast}_k + P^C_{\fr k} y^{(I)}_k \wedge y^{(I)\ast}_k \right] \\
&= \left( i e_{pp} - i e_{p+1,p+1} \right) \wedge y^{(2)\ast} + \sum_{k=1}^{p-1} \left[ ( e_{pk} - e_{kp} ) \wedge y^{(R)\ast}_k + ( i e_{kp} + i e_{pk} ) \wedge y^{(I)\ast}_k \right] .
\end{split}
\]

Let us take a look at the structure one the Lie group level.
(The formulas apply to the case $p = 1$ as well.)
The maximal compact subgroup $K$ is
\[
\mathrm{S} ( \mathrm{U}(p) \times \mathrm{U}(1) ) = \left\{ 
\begin{pmatrix}
U & 0 \\ 0 & \ol{\det(U)}
\end{pmatrix}
\Biggm| U \in \mathrm{U}(p) \right\} \cong \mathrm{U}(p) .
\]
Write $\rE(p+1)$ for the Poisson--Lie group $\fr k^0 \rtimes K$ constructed out of $\SU(p,1)$ as in the previous section.
Then $\rE^\C(p+1) \cong \C^p \rtimes \mathrm{U}(p)$ and a brief computation shows that
\[
\Ad^\vee_U (z_1,\dots,z_p) = \det(U) U \begin{pmatrix}
z_1 \\ \vdots \\ z_p 
\end{pmatrix}
.
\]
Thus, the groups $\rE^\C(p+1)$ can be regarded as subgroups of the even-dimensional Euclidean groups compatible with the complex structure.

\subsection{Another Deformation Scheme}

Let $\fr{g}_\C$ be the complexification of $\fr{g}$.
By our assumption on $K$, there is commutative subgroup $T \subset K$ whose complexified Lie algebra $\fr{t}_\C$ is a Cartan subalgebra of $\fr{g}_\C$.
Let us also choose an order of roots for this Cartan subalgebra.
Then we get a Manin triple $(\fr{g}_\C,\fr{g},\fr{g}^*)$, where $\fr{g}^*$ the subspace of $\fr{g}_\C$ spanned by $i\fr{t}$ and the positive root spaces, and the real invariant bilinear form on $\fr{g}_\C$ is given by the imaginary part of the Killing form \cite{MR1284791}.
This defines a Poisson--Lie group structure on $G$.

As remarked at the end of Section \ref{sec:exmpl-e2-from-su11}, in the case of $\fr g = \fr{su}(1,1)$, the Lie algebra $\fr{e}^*$ is isomorphic to $\fr{g}^*$.
In general this relation does not hold, which can be directly seen for the above examples $\fr{su}(p, 1)$.
Nonetheless, there is another similar construction that does relate $\fr g$ and $\fr e$.
(A similar construction for $G = \SO(n,1)_0$ was studied in detail in \cite{arXiv:q-alg/9712040}.)

One motivation behind this construction is the following relation between $E$ and $G_\C$, the complexification of $G$.


\begin{prop}
The subspace $\fr{k}^0 \subset \fr{g}^*$ is commutative.
\end{prop}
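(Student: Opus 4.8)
The plan is to identify $\fr k^0$ explicitly as an abelian root-space-like subalgebra of $\fr g^*$ and read off commutativity from the Hermitian structure already encoded in $z$. Since $\s Z(\fr k)$ is nondiscrete, $(\fr g,\fr k)$ is a Hermitian symmetric pair, so with $z$ spanning $\s Z(\fr k)$ normalized by \eqref{eq:z-normalization} the operator $\ad(z)$ has eigenvalues $0,\pm i$ and yields $\fr p_\C = \fr p^+\oplus\fr p^-$, the $\pm i$-eigenspaces, while $\ad(z)$ vanishes on $\fr k_\C$. The one structural input I would record first is that $\fr p^+$ is abelian: for $X,Y\in\fr p^+$ we have $[X,Y]\in[\fr p,\fr p]\subset\fr k_\C$ by \eqref{eq:cobr-rel-for-Cartan-decomp}, on which $\ad(z)=0$, whereas the Leibniz rule gives $\ad(z)[X,Y]=2i[X,Y]$; hence $[X,Y]=0$.

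Next I would pin down $\fr k^0$ inside $\fr g^* = i\fr t\oplus\fr n_+$. Choosing the order of roots compatibly with $z$, so that the positive noncompact root spaces are exactly $\fr p^+$, I claim $\fr k^0 = \fr p^+$. The inclusion $\fr p^+\subseteq\fr k^0$ is immediate from the orthogonality of the Cartan decomposition under the Killing form $B$: since $B(\fr k_\C,\fr p_\C)=0$ we have $B(\fr p^+,\fr k)=0$, and a fortiori $\Im B(\fr p^+,\fr k)=0$. For the reverse inclusion I would argue by dimension: the pairing $\Im B$ is nondegenerate between $\fr g$ and $\fr g^*$, so $\dim_\R\fr k^0 = \dim_\R\fr g - \dim_\R\fr k = \dim_\R\fr p$, while $\dim_\R\fr p^+ = 2|\Delta_n^+| = |\Delta_n| = \dim_\R\fr p$, where $\Delta_n$ denotes the noncompact roots and $\Delta_n^+$ its positive part. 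The two dimensions agree, forcing $\fr k^0 = \fr p^+$.

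Finally, the bracket on $\fr g^*$ in the Manin triple $(\fr g_\C,\fr g,\fr g^*)$ is by construction the restriction of the bracket of $\fr g_\C$, so commutativity of $\fr k^0$ is precisely commutativity of $\fr p^+$, established in the first step. This completes the argument.

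I expect the main obstacle to be the identification $\fr k^0 = \fr p^+$, and within it the compatibility of the chosen order of roots with the grading by $\ad(z)$. Without compatibility, the positive noncompact root spaces could contain roots of both signs of $\ad(z)$-eigenvalue, the sum of two such roots could be a (compact) root, and $\fr k^0$ would fail to be abelian. I would therefore either make the compatible choice explicit or, more robustly, characterize $\fr k^0$ intrinsically as the $+i$-eigenspace of $\ad(z)$ acting on $\fr g^*$, so that the abelianness computation of the first step applies verbatim.
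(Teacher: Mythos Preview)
Your argument is correct (with the ordering caveat you already flag) and takes a genuinely different route from the paper's. The paper never identifies $\fr k^0$ with $\fr p^+$. Instead it passes to a compact real form $\fr g_c \supset \fr k$ of $\fr g_\C$, for which $\fr g_c^* = \fr g^*$, and writes $\fr k = \fr g_c^\nu$ for the involutive automorphism $\nu = \Ad_{\exp x}$ with $x \in \s Z(\fr k)$. Since $\exp x$ lies in the torus, $\nu^t$ restricts to a Lie algebra automorphism of $\fr g^*$, and $\fr k^0$ is precisely its $(-1)$-eigenspace. Invoking that $K$ is a Poisson--Lie subgroup of $G$, so that $\fr k^0$ is an ideal, the paper concludes $[\fr k^0, \fr k^0] \subset \fr k^0 \cap (\fr g^*)^{\nu^t} = \{0\}$.

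Your approach buys the explicit description $\fr k^0 = \fr p^+$, and the abelianness of $\fr p^+$ is a one-line eigenvalue computation. The paper's approach avoids root-space bookkeeping but needs the structural input that $K$ is Poisson--Lie. Both proofs ultimately rest on the same hidden compatibility between the positive system and $z$: yours so that the positive noncompact root spaces assemble into $\fr p^+ \subset \fr g^*$, the paper's so that $\fr k^0$ is actually an ideal. Your closing suggestion---recasting $\fr k^0$ as an eigenspace of $\ad(z)$ on $\fr g^*$---is essentially the paper's involution trick in infinitesimal form, since $\nu = \exp(\pi\,\ad z)$.
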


\begin{proof}
Pick a compact form $\fr{g}_c$ of $\fr{g}_\C$ that contains $\fr{k}$.

On the one hand, the above choice of positive roots defines a Lie bialgebra structure on $\fr{g}_c^*$, and by construction $\fr{g}_c^*$ is identified with $\fr{g}^*$.

On the other, we have $\fr{k} = \fr{g}_c^\nu$ for some invariant automorphism $\nu$ of $\fr{g}_c$.
The automorphism $\nu$ must be of the form $\Ad_{\exp(x)}$ for some $x \in \s{Z}(\fr{k})$, and hence $\nu^t$ is also a Lie algebra automorphism of $\fr{g}_c^*$.
It is clear that $\fr{k}^0$ must be the $(-1)$-eigenspace of $\nu^t$ acting on $\fr{g}_c^*$, and it is also an ideal of $\fr{g}_c^*$ as $K$ is a Poisson--Lie subgroup of $G$.
Finally then,
\[
\left[ \fr{k}^0 , \fr{k}^0 \right] \subset \fr{k}^0 \cap \left( \fr{g}_c^* \right)^{\nu^t} = \{ 0 \} .
\]
This shows the claim.
\end{proof}

\begin{cor}
The group $E = \fr k^0 \rtimes K$ is a subgroup of $G_\C$.
\end{cor}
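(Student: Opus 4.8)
The plan is to work first at the level of Lie algebras and then integrate. Observe that
\[
\dim \fr{e} = \dim \fr{k}^0 + \dim \fr{k} = \dim \fr{p} + \dim \fr{k} = \dim \fr{g} = \dim \fr{g}^*,
\]
so any injective Lie algebra homomorphism $\fr{e} \to \fr{g}^*$ is automatically an isomorphism onto $\fr{g}^*$; it therefore suffices to produce such a homomorphism and then to check that the connected subgroup of $G^*$ it integrates to is a copy of $E$. The structural input comes straight from the proof of the preceding Proposition: the transposed automorphism $\nu^t$ splits $\fr{g}^* = \fr{g}_c^*$ into its $(\pm 1)$-eigenspaces, the $(-1)$-eigenspace being $\fr{k}^0$ and the $(+1)$-eigenspace a complementary subspace $\fr{l} := (\fr{g}^*)^{\nu^t}$. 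Since $\nu^t$ is a Lie algebra automorphism, the fixed-point set $\fr{l}$ is a subalgebra; combined with the Proposition (which gives $[\fr{k}^0,\fr{k}^0]=0$) and the fact that $\fr{k}^0$ is an ideal (because $K$ is a Poisson--Lie subgroup), this yields $[\fr{l},\fr{k}^0]\subseteq\fr{k}^0$. Thus $\fr{g}^*$ is itself a semidirect product $\fr{k}^0 \rtimes \fr{l}$, mirroring the shape $\fr{e} = \fr{k}^0 \rtimes \fr{k}$ exactly.

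The heart of the argument is to identify $\fr{l}$ with $\fr{k}$ so that the two presentations coincide. Since $\fr{l}$ is the annihilator $\fr{p}_c^0$ of the compact complement $\fr{p}_c$, the duality pairing identifies it linearly with $(\fr{g}_c/\fr{p}_c)^* \cong \fr{k}^*$, and I would use the Poisson--Lie subgroup structure of $K \subset G$ to upgrade this to a comparison with $\fr{k}$ itself. Concretely, I would show that under this identification the adjoint action of $\fr{l}$ on $\fr{k}^0$ computed \emph{internally} to $\fr{g}^*$ agrees with the coadjoint action $\ad^*$ of $\fr{k}$ on $\fr{k}^0\subset\fr{g}^*$ that defines the semidirect product $E$. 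Granting this, the resulting linear isomorphism $\fr{k}^0 \rtimes \fr{k} \to \fr{k}^0 \rtimes \fr{l}$ is a Lie algebra isomorphism $\fr{e}\xrightarrow{\sim}\fr{g}^*$, onto all of $\fr{g}^*$ by the dimension count.

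The main obstacle is precisely this matching of the two actions: a priori the bracket internal to $\fr{g}^*$ and the externally defined coadjoint action $\ad^*$ of $\fr{k}$ are different operations, and reconciling them is where the special geometry must enter. The normalization $\ad(z)^2|_{\fr{p}} = -1$ and the splitting $\fr{g}_c = \fr{k}\oplus\fr{p}_c$ should be exactly what forces the action of $\fr{l}$ on $\fr{k}^0$ to be the compact coadjoint action, equivalently what guarantees that the subgroup of $G^*$ integrating $\fr{l}$ is compact and isomorphic to $K$ rather than a split solvable factor; establishing this is the delicate step. Once the Lie algebra isomorphism is in hand, I would integrate: $\fr{k}^0$ exponentiates to a closed normal vector subgroup of $G^*$, the subalgebra $\fr{l}\cong\fr{k}$ integrates to a subgroup isomorphic to $K$, and their semidirect product realizes $E$ inside $G^*$. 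Finally I would dispose of connectedness and covering issues — since $\dim E = \dim G^*$ the image is open, hence the identity component — to conclude that $E$ sits inside $G^*$ as the claimed subgroup.
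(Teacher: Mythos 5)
Your reduction makes the task strictly impossible, and the failure is exactly at the step you defer. The dimension count is correct, but it proves too much: a subgroup of full dimension would force $\fr{e} \cong \fr{g}^*$, and this is false. The algebra $\fr{g}^*$, spanned by $i\fr{t}$ and the positive root spaces, is solvable, whereas $\fr{e}$ contains the compact algebra $\fr{k}$, which is non-abelian in every case except $\fr{g} = \fr{su}(1,1)$ (e.g.\ $\fr{k} \cong \fr{u}(p)$ for $G = \SU(p,1)$, $p \geq 2$); hence no injective Lie algebra homomorphism $\fr{e} \to \fr{g}^*$ exists at all. Concretely, your ``delicate step'' of identifying $\fr{l} = (\fr{g}^*)^{\nu^t}$ with $\fr{k}$ compatibly with the actions on $\fr{k}^0$ cannot be carried out: $\fr{l}$ is spanned by $i\fr{t}$ and the positive root vectors of $\fr{k}_\C$, i.e.\ it is the standard \emph{solvable} dual $\fr{k}^*$ of the compact bialgebra $\fr{k}$ (you noted the linear identification with $\fr{k}^*$ yourself, but the upgrade to $\fr{k}$ is unavailable). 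Moreover the internal bracket action of $\fr{l}$ on $\fr{k}^0$ is triangularizable with \emph{real} eigenvalues (the roots are real-valued on $i\fr{t}$), so no element of $\fr{l}$ acts with square $-1$, while matching with $\ad^*$ would require an element acting like $\ad^*(z)$, whose square is $-1$ on $\fr{k}^0$ by the normalization \eqref{eq:z-normalization}. Even in the base case $\fr{g} = \fr{su}(1,1)$, where $\fr{l} \cong \fr{k} \cong \R$ abstractly, the actions are a real dilation versus a rotation, and indeed $\fr{su}(1,1)^* \not\cong \fr{e}(2)$ as Lie algebras. At the group level the obstruction is just as visible: the connected dual group integrating $\fr{g}^*$ is simply connected solvable and so contains no nontrivial compact subgroup, whence it cannot contain $K$; your final open-image/identity-component step therefore cannot run either.

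The paper attempts nothing of this sort: the corollary is drawn immediately from the Proposition, with no dimension count and no comparison of a complement of $\fr{k}^0$ with $\fr{k}$. The relevant mechanism is that the Proposition exhibits $\fr{k}^0$ as a \emph{commutative} subalgebra of $\fr{g}^*$, which in addition is invariant under $K$ acting by $\Ad^*$ --- inside $\fr{g}_\C$ one has $\fr{k}^0 = \fr{p}^+$, the abelian sum of the positive noncompact root spaces, normalized by $\fr{k}$. Consequently $\fr{k}^0$ exponentiates to a vector group normalized by $K$ and meeting it trivially, so that $\fr{k}^0 \rtimes K$ is realized as a genuine subgroup with the correct $\Ad^*$-action; this is precisely how the following subsection situates $\fr{e}$, namely as $\fr{g}' = \sigma(\fr{k}^0) \rtimes \fr{k} \subset \fr{g}_\C$ with $(\fr{g}_\C, \fr{g}', \fr{g}^*)$ a Manin triple --- so $\fr{e}$ sits \emph{transverse} to $\fr{g}^*$ in the double, sharing with it only the commutative part $\fr{k}^0$, rather than filling it up. In short: the content of the corollary is the inclusion of the abelian ideal $\fr{k}^0$ into $\fr{g}^*$ with the compact factor adjoined through its $\Ad^*$-action, whereas your reading promotes it to a full-dimensional embedding $\fr{e} \hookrightarrow \fr{g}^*$, equivalently $E \cong G^*$, which is false and which no refinement of your eigenspace-matching scheme can rescue.
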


Now, take the Cartan decomposition $\fr g = \fr k \oplus \fr p$.
Then we have $\fr e \simeq \fr p \rtimes \fr k$, where we treat $\fr p$ as a vector space with an action of $\fr k$.
To recover $\fr g$ from $\fr e$, we just need to use the restriction of the bracket map $\largewedge^2 \fr p \to \fr k$ as a cocycle (say $c$), and deform the bracket of $\fr e$.
Moreover, as the standard maximal compact form $\fr g_c \subset \fr g_\C$ is $\fr k \oplus i \fr p$, the deformation of $\fr e$ by the inverted cocycle $-c$ gives $\fr g_c$.

On the other hand, $\sigma(\fr k^0)$ is a subalgebra of $\fr g_\C$ which is stable under $\ad(\fr k)$ and is isomorphic to $\fr p$ as a $\fr k$-module.
Thus, the subalgebra $\fr g' = \sigma(\fr k^0) \rtimes \fr k \subset \fr g_\C$ is isomorphic to $\fr g' \simeq \fr e$.
Moreover, $(\fr g_\C, \fr g', \fr g^*)$ is a Manin triple.

Thus, we have three Lie bialgebra structures on $\fr g^*$ corresponding to $\fr g$, $\fr g_c$, and $\fr g'$.
By the above discussion, the corresponding cobrackets $\delta_{\fr g}$, $\delta_{\fr g_c}$, and $\delta_{\fr g'}$ are different by one map $c' \colon \fr g^* \to \largewedge^2 \fr g^*$ dualizing $c$.

\begin{prop}
Let $s \in \largewedge^2 \fr p$ be the element representing the complex structure of $\fr p$.
Up to the identification $\fr p \simeq \fr p^*$ by the inner product and $\fr p^* \subset \fr g^*$ from the Cartan decomposition, $s$ satisfies
\[
\frac12 [s, s] + d s = 0,
\]
where $d$ is the differential of the Gerstenhaber algebra corresponding to $\delta_{\fr g'}$.
The twist of $\delta_{\fr g'}$ by $s$ is $\delta_{\fr g}$.
\end{prop}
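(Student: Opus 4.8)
The plan is to realize the passage from $\delta_{\fr{g}'}$ to $\delta_{\fr{g}}$ as a Drinfeld twist of Lie bialgebras and then to verify the associated Maurer--Cartan equation. Recall the general formalism: for a Lie bialgebra $(\fr{a},\delta)$ and $t\in\largewedge^2\fr{a}$, the modified cobracket $\delta_t(\xi)=\delta(\xi)+[\xi,t]$, with the Schouten bracket acting on the left factor, is again a Lie bialgebra cobracket exactly when $\tfrac12[t,t]+dt$ is $\ad$-invariant, and it is a genuine twist when this element vanishes. I apply this with $\fr{a}=\fr{g}^*$, $\delta=\delta_{\fr{g}'}$ and $t=s$, so the two assertions of the proposition become (A) the twist identity $\partial_s:=[\,\cdot\,,s]=c'$, where $c'=\delta_{\fr{g}}-\delta_{\fr{g}'}$ is the difference map dualizing $c$ recorded above, and (B) the vanishing $\tfrac12[s,s]+ds=0$.

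First I would fix the explicit form of $s$. The element $z$ provides the complex structure $J=\ad(z)|_{\fr{p}}$ with $J^2=-1$, and, choosing an orthonormal basis $(e_\alpha)$ of $\fr{p}$ for the invariant form, $s=\tfrac12\sum_\alpha e_\alpha\wedge Je_\alpha$. Under the identifications $\fr{p}\simeq\fr{p}^*$ and $\fr{p}^*\simeq\fr{k}^0\subset\fr{g}^*$ this places $s$ in $\largewedge^2\fr{k}^0$. Since $\fr{k}^0$ is a commutative subalgebra of $\fr{g}^*$ (proven above), the Schouten self-bracket $[s,s]$ involves only brackets among elements of $\fr{k}^0$ and hence vanishes. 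Thus (B) collapses to $ds=0$, and the whole Maurer--Cartan equation is carried by the differential $d=d_{\delta_{\fr{g}'}}$ alone.

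The core of the argument is (A). Both sides are supported on the summand $\fr{p}^0\simeq\fr{k}^*$ of $\fr{g}^*$ and take values in $\largewedge^2\fr{k}^0$: on $\fr{k}^0$ the map $c'$ vanishes because $c$ is $\fr{k}$-valued, while $[\xi,s]=0$ for $\xi\in\fr{k}^0$ by commutativity. For $\xi\in\fr{p}^0$ I would expand $[\xi,s]=\tfrac12\sum_\alpha\bigl([\xi,e_\alpha]\wedge Je_\alpha+e_\alpha\wedge[\xi,Je_\alpha]\bigr)$, where the $e_\alpha$ are understood as their images in $\fr{k}^0$ and $[\,\cdot\,,\cdot\,]$ is the $\fr{g}^*$-bracket, using the mixed brackets $[\fr{p}^0,\fr{k}^0]$ coming from the Manin triple $(\fr{g}_\C,\fr{g},\fr{g}^*)$, and then pair the result against $X\wedge Y$ for $X,Y\in\fr{p}$. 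The target is the identity $\langle[\xi,s],X\wedge Y\rangle=\langle\xi,[X,Y]\rangle$, which is precisely $c'$. The key input is the invariance of the form together with $[z,X]=JX$, which converts the contraction of $s$ by $\ad(\fr{k})$ into the full bracket cocycle $c$; this is exactly the Hermitian-symmetric feature that the curvature term $\largewedge^2\fr{p}\to\fr{k}$ is generated by the complex structure. I expect this step to be the main obstacle, since it requires matching the solvable Lie structure of $\fr{g}^*$ against the single bivector $s$ while keeping the several dualities $\fr{p}\simeq\fr{p}^*\simeq\fr{k}^0$ and $\fr{k}\simeq\fr{k}^*\simeq\fr{p}^0$ mutually consistent.

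Once (A) is established, the twisted cobracket $\delta_{\fr{g}'}+\partial_s$ equals $\delta_{\fr{g}}$, which is a bona fide Lie bialgebra cobracket arising from the Manin triple $(\fr{g}_\C,\fr{g},\fr{g}^*)$. By the twist criterion this forces $\tfrac12[s,s]+ds$ to be $\ad$-invariant, and since $[s,s]=0$ the remaining term $ds$ is an $\ad$-invariant element of $\largewedge^3\fr{g}^*$. As $\fr{g}^*$ is the solvable dual Lie algebra of a simple Lie algebra, an argument like the one in the Remark above (testing against $\Delta(z)$) shows that it carries no nonzero invariant $3$-vector of this type, so $ds=0$. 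This simultaneously yields (B) and identifies the twist of $\delta_{\fr{g}'}$ by $s$ with $\delta_{\fr{g}}$, completing the proof.
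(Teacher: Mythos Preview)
The paper states this proposition without proof, so there is nothing to compare against; I can only assess your outline on its own terms.

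Your reduction $[s,s]=0$ via the commutativity of $\fr{k}^0\subset\fr{g}^*$ is correct and is a genuine simplification. The overall strategy of first establishing the twist identity (A), then deducing that $\tfrac12[s,s]+ds$ is $\ad$-invariant from the fact that $\delta_{\fr g}$ is an honest cobracket, is also sound in principle.

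There are, however, two real gaps. The first is step (A) itself: you correctly isolate the target identity $\langle[\xi,s],X\wedge Y\rangle=\langle\xi,[X,Y]\rangle$ for $\xi\in\fr{p}^0$ and $X,Y\in\fr p$, but the justification you offer is only a heuristic. The phrase ``the curvature term $\largewedge^2\fr p\to\fr k$ is generated by the complex structure'' is not literally true for general Hermitian symmetric pairs --- integrability gives $[JX,JY]=[X,Y]$, but the bracket is not determined by $J$ alone --- so you will need to unwind the $\fr g^*$-bracket explicitly (it comes from the Manin triple $(\fr g_\C,\fr g',\fr g^*)$, i.e.\ from the $\fr e$-structure on $\fr g'$) and match it against $c$ by a direct computation. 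This is doable, but it is the substance of the proof and cannot be left at the level of a slogan.

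The second gap is the final step: you assert that a $\fr g^*$-invariant element of $\largewedge^3\fr g^*$ ``of this type'' must vanish by analogy with the Remark after Theorem~\ref{thm:gen-e2-from-herm-simpl-cobdry}. That Remark concerns $\fr e$-invariants in $\fr k\otimes\fr k^0\oplus\fr k^0\otimes\fr k$, which is a different space and a different action; testing against $\Delta(z)$ will not obviously kill an element of $\fr p^0\wedge\fr k^0\wedge\fr k^0$ (which is where $ds$ lives, as you can check from $\delta_{\fr g'}(\fr k^0)\subset\fr p^0\wedge\fr k^0$). You should either verify $ds=0$ directly --- this amounts to the $\fr k$-invariance of $s$, which \emph{does} follow immediately from $[z,\fr k]=0$ and the fact that $\ad(\fr k)$ commutes with $J$ on $\fr p$ --- or supply a genuine invariance argument. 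The direct route is much shorter and avoids the detour through $\ad$-invariance altogether.
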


That is, if $(y_j)_j$ is an orthonormal basis of $\fr p$ as a real Euclidean space, we have
\[
s = \sum_j i y_j \otimes y_j,
\]
where $i y_j$ is computed using the complex structure of $\fr p$.

By the main result of \cite{MR2271019}, quantized universal algebras $U_h(\fr g^*)$ arising from these Lie bialgebra structures on $\fr g^*$ are related by $2$-cocycles.
On the other hand, by the quantum duality principle, these can be interpreted as function algebras on the quantum groups $G_h$, $(G_c)_h$, and $E_h$.
This gives a formal analogue of the functional analytic construction of $2$-cocycles by De Commer \cite{MR2793934} for the case $\fr g = \fr{su}(1, 1)$ that connects Woronowicz's $\SU_q(2)$ \cite{MR890482} to Koelink--Kusterman's $\widetilde{\SU}_q(1,1)$ \cite{MR1962042}.

\appendix
\section{Groupoid \texorpdfstring{C$^*$}{C*}-Algebras of Matched Pairs of Lie Groups}
\label{appdix:grpd-C-star-alg}

Let $(B, C)$ be a matched pair of subgroups of a Lie group $G$.
Let us be precise about our convention of the groupoid C$^*$-algebra of the groupoid $\s G = \s G_B$.
We are going to make sense of unbounded multipliers coming from elements of $\fr c$.

First, let us fix our convention of Haar system on $\s G$.
We mostly follow the convention of \cite{MR584266}.

Let $\lambda$ be a left Haar measure on the Lie group $C$, and $\Delta$ be the modular function of $C$.
We thus have
\begin{align*}
\int_C f(c c') d \lambda(c') &= \int_C f(c') d \lambda(c'),&
\int_C f(c' c) d \lambda(c') &= \Delta(c^{-1}) \int_C f(c') d \lambda(c')
\end{align*}
for all $c \in C$ and $f \in C_c(C)$.
Let us also write
\[
\Delta(y) = \left.\frac{d \Delta(e^{t y})}{d t} \right|_{t=0} \quad (y \in \fr c).
\]

On the groupoid $\s G$, to avoid the confusion with the group structure of $G$, we write $g \cdot g'$ for the product of composable arrows and $\tilde g$ for the groupoid inverse.
Thus, given $g = r(g) c = c'' s(g)$ and $g' = s(g) c'$ with $s(g), r(g) \in B$, we have $g \cdot g' = r(g) c c'$ and $\tilde g = c^{''-1} r(g) = s(g) c^{-1}$.

For a fixed $b \in B$, the set $\s G^b$ can be identified with
\[
C^{(b)} = \{ c \in C \mid b c \in C B \},
\]
which is an open subset of $C$ by our assumption.
We define the measure $\lambda^b$ on $\s G^b$ to be the restriction of $\lambda$ on $C^{(b)}$ up to this identification.
Then the invariance condition
\[
\int f(g \cdot g') d \lambda^{s(g)}(g') = \int f(g') d \lambda^{r(g)}(g') \quad (g \in \s G_B, f \in C_c(\s G^{r(g)})
\]
follows from the left invariance of $\lambda$.
By $C^*(\s G)$ and $C^*_r(\s G)$, we mean the full and reduced groupoid C$^*$-algebras associated with this Haar system.

Next let us make sense of the associated left derivation in direction of $y \in \fr c$ as an unbounded multiplier on $C^*(\s G)$ (or on $C^*_r(\s G)$).
We follow the approach of \cite{MR1325694}*{Chapters 9 and 10}.
We consider the $C_c(\s G)$-valued inner product on $C_c(\s G)$, defined as
\[
\langle f, f' \rangle (g) = f^* * f'(g) = \int \bar f(\tilde g' \cdot \tilde g) f'(\tilde g') d \lambda^{s(g)}(g').
\]

Let us fix $y \in \fr c$, and formally write
\[
(u^s * f)(g) = \Delta(e^{s y})^{1/2} f(e^{-s y} c'' b') \quad (g = c'' b').
\]
When $f$ is compactly supported, this is well-defined for small $s$ by our openness assumption of $\s G \subset G$.
Moreover, we have
\begin{equation}\label{eq:u-s-is-right-module-map}
(u^s * f) * f' = u^s * (f * f')
\end{equation}
from left invariance of $\lambda$, and
\begin{equation}\label{eq:u-s-is-unitary}
(u^s * f)^* * f' = f^* * (u^{-s} * f')
\end{equation}
from the defining property of $\Delta$, whenever both sides are well-defined.
In particular, for any $f \in C_c(\s G)$, we have
\[
\langle f, f \rangle = \langle u^s * f, u^s * f \rangle
\]
for sufficiently small $s$.

Now, we define the operator $t^0_y$ on $C^\infty_c(\s G)$ by
\[
t^0_y f = i \left.\frac{d}{d s} u^s * f\right|_{s=0} = \frac{i}{2} \Delta(y) f + i X_y f,
\]
where $X_y$ is the smooth vector field on $\s G$ whose integral curve is $c'' b' \to e^{-s y} c'' b'$.
By \eqref{eq:u-s-is-right-module-map}, this is a map of right $C_c(\s G)$-modules.
By \eqref{eq:u-s-is-unitary}, we also have
\[
\langle t^0_y f, f' \rangle - \langle f, t^0_y f' \rangle = 0.
\]
Polynomials of $t^0_{y_1}, \dots, t^0_{y_k}$ for $y_1, \dots, y_k \in \fr c$ make sense as operators on $C^\infty_c(\s G)$.
Looking at the commutators we have
\[
[t^0_{y_1}, t^0_{y_2}] = i t^0_{[y_1, y_2]}
\]
from the standard property of $X_y$ (note that $\Delta([y_1, y_2])$ always vanishes as $\Delta$ is a homomorphism).

Next, let $f$ be a bounded continuous function on $B$.
For $f' \in C_c(\s G)$, we define the left action of $f$ on $f'$, $f * f' \in C_c(\s G)$ by
\[
(f * f')(g) = f(r(g)) f'(g).
\]
We then have $\langle f * f', f'' \rangle = \langle f', \bar f * f'' \rangle$ for $f', f'' \in C_c(\s G)$.
Thus, the left action of $f$ extends to a bounded adjointable endomorphism of $A$.
When $f \in C^\infty_b(B)$ and $f' \in C^\infty_c(\s G)$, we have $f * f' \in C^\infty_c(\s G)$.
Given $y \in \fr c$, the differentiation of the action of $e^{s y}$ on $B$ defines a vector field $X'_y$ on $B$.
We then have the commutation relation
\[
t^0_y (f * f') - f * (t^0_y f') = i X'_y(f) * f' \quad (f' \in C^\infty_c(\s G),
\]
giving a realization of the algebra $U(\fr c) \ltimes C^\infty_b(B)$ inside the space of unbounded multipliers of $C^*(\s G)$.
(In general we make no claim about regularity properties of these multipliers.)

In the case of double Lie group, $G = B C$, we can interpret $C^*(\s G)$ as the full crossed product $C_0(B) \rtimes C$ for the induced action of $C$ on $B$.
In this case there is a unital $*$-homomorphism $\s M(C^*(C)) \to \s M(C^*(\s G))$, and $t^0_y$ has an extension to an unbounded selfadjoint element affiliated to $C^*(\s G)$, see \cite{MR1232646}.

\begin{bibdiv}
\begin{biblist}

\bib{MR1969723}{article}{
      author={Baaj, Saad},
      author={Skandalis, Georges},
      author={Vaes, Stefaan},
       title={Non-semi-regular quantum groups coming from number theory},
        date={2003},
        ISSN={0010-3616},
     journal={Comm. Math. Phys.},
      volume={235},
      number={1},
       pages={139\ndash 167},
         url={http://dx.doi.org/10.1007/s00220-002-0780-6},
         doi={10.1007/s00220-002-0780-6},
      review={\MR{1969723 (2004g:46083)}},
}

\bib{MR636521}{incollection}{
      author={Connes, Alain},
       title={Feuilletages et alg\`ebres d'op\'erateurs},
        date={1981},
   booktitle={Bourbaki {S}eminar, {V}ol. 1979/80},
      series={Lecture Notes in Math.},
      volume={842},
   publisher={Springer},
     address={Berlin},
       pages={139\ndash 155},
      review={\MR{MR636521 (83c:58077)}},
}

\bib{MR998124}{article}{
      author={Courant, Theodore~James},
       title={Dirac manifolds},
        date={1990},
        ISSN={0002-9947},
     journal={Trans. Amer. Math. Soc.},
      volume={319},
      number={2},
       pages={631\ndash 661},
         url={https://doi.org/10.2307/2001258},
         doi={10.2307/2001258},
      review={\MR{998124}},
}

\bib{MR996653}{incollection}{
      author={Coste, A.},
      author={Dazord, P.},
      author={Weinstein, A.},
       title={Groupo\"{\i}des symplectiques},
        date={1987},
   booktitle={Publications du {D}\'{e}partement de {M}ath\'{e}matiques.
  {N}ouvelle {S}\'{e}rie. {A}, {V}ol. 2},
      series={Publ. D\'{e}p. Math. Nouvelle S\'{e}r. A},
      volume={87},
   publisher={Univ. Claude-Bernard, Lyon},
       pages={i\ndash ii, 1\ndash 62},
         url={https://mathscinet.ams.org/mathscinet-getitem?mr=996653},
      review={\MR{996653}},
}

\bib{MR2793934}{article}{
      author={De~Commer, Kenny},
       title={On a correspondence between {${\rm SU}_q(2),\ \widetilde E_q(2)$}
  and {$\widetilde{\rm SU}_q(1,1)$}},
        date={2011},
        ISSN={0010-3616},
     journal={Comm. Math. Phys.},
      volume={304},
      number={1},
       pages={187\ndash 228},
         url={https://doi.org/10.1007/s00220-011-1208-y},
         doi={10.1007/s00220-011-1208-y},
      review={\MR{2793934}},
}

\bib{MR2271019}{article}{
      author={Halbout, Gilles},
       title={Formality theorem for {L}ie bialgebras and quantization of twists
  and coboundary {$r$}-matrices},
        date={2006},
        ISSN={0001-8708},
     journal={Adv. Math.},
      volume={207},
      number={2},
       pages={617\ndash 633},
         url={https://doi.org/10.1016/j.aim.2005.12.006},
         doi={10.1016/j.aim.2005.12.006},
      review={\MR{2271019}},
}

\bib{MR1834454}{book}{
      author={Helgason, Sigurdur},
       title={Differential geometry, {L}ie groups, and symmetric spaces},
      series={Graduate Studies in Mathematics},
   publisher={American Mathematical Society, Providence, RI},
        date={2001},
      volume={34},
        ISBN={0-8218-2848-7},
         url={http://dx.doi.org/10.1090/gsm/034},
         doi={10.1090/gsm/034},
        note={Corrected reprint of the 1978 original},
      review={\MR{1834454}},
}

\bib{MR1962042}{article}{
      author={Koelink, Erik},
      author={Kustermans, Johan},
       title={A locally compact quantum group analogue of the normalizer of
  {$\rm SU(1,1)$} in {${\rm SL}(2,\mathbb C)$}},
        date={2003},
        ISSN={0010-3616},
     journal={Comm. Math. Phys.},
      volume={233},
      number={2},
       pages={231\ndash 296},
         url={https://doi.org/10.1007/s00220-002-0736-x},
         doi={10.1007/s00220-002-0736-x},
      review={\MR{1962042}},
}

\bib{MR1284791}{article}{
      author={Korogodsky, Leonid~I.},
       title={Quantum group {${\rm SU}(1,1)\rtimes Z_2$} and ``super-tensor''
  products},
        date={1994},
        ISSN={0010-3616},
     journal={Comm. Math. Phys.},
      volume={163},
      number={3},
       pages={433\ndash 460},
         url={http://projecteuclid.org/euclid.cmp/1104270579},
      review={\MR{1284791}},
}

\bib{MR1325694}{book}{
      author={Lance, E.~C.},
       title={Hilbert {$C^*$}-modules},
      series={London Mathematical Society Lecture Note Series},
   publisher={Cambridge University Press},
     address={Cambridge},
        date={1995},
      volume={210},
        ISBN={0-521-47910-X},
         url={http://dx.doi.org/10.1017/CBO9780511526206},
         doi={10.1017/CBO9780511526206},
        note={A toolkit for operator algebraists},
      review={\MR{1325694 (96k:46100)}},
}

\bib{MR1722129}{article}{
      author={Landsman, N.~P.},
       title={Lie groupoid {$C^*$}-algebras and {W}eyl quantization},
        date={1999},
        ISSN={0010-3616},
     journal={Comm. Math. Phys.},
      volume={206},
      number={2},
       pages={367\ndash 381},
         url={https://doi.org/10.1007/s002200050709},
         doi={10.1007/s002200050709},
      review={\MR{1722129}},
}

\bib{MR1855249}{incollection}{
      author={Landsman, N.~P.},
      author={Ramazan, B.},
       title={Quantization of {P}oisson algebras associated to {L}ie
  algebroids},
        date={2001},
   booktitle={Groupoids in analysis, geometry, and physics ({B}oulder, {CO},
  1999)},
      series={Contemp. Math.},
      volume={282},
   publisher={Amer. Math. Soc., Providence, RI},
       pages={159\ndash 192},
         url={https://doi.org/10.1090/conm/282/04685},
         doi={10.1090/conm/282/04685},
      review={\MR{1855249}},
}

\bib{MR1045735}{article}{
      author={Majid, Shahn},
       title={Physics for algebraists: noncommutative and noncocommutative
  {H}opf algebras by a bicrossproduct construction},
        date={1990},
        ISSN={0021-8693},
     journal={J. Algebra},
      volume={130},
      number={1},
       pages={17\ndash 64},
         url={https://doi.org/10.1016/0021-8693(90)90099-A},
         doi={10.1016/0021-8693(90)90099-A},
      review={\MR{1045735}},
}

\bib{MR1267935}{article}{
      author={Ma\'{s}lanka, Pawe\l},
       title={The {$E_q(2)$} group via direct quantization of the
  {L}ie-{P}oisson structure and its {L}ie algebra},
        date={1994},
        ISSN={0022-2488},
     journal={J. Math. Phys.},
      volume={35},
      number={4},
       pages={1976\ndash 1983},
         url={https://doi.org/10.1063/1.530582},
         doi={10.1063/1.530582},
      review={\MR{1267935}},
}

\bib{MR1292010}{incollection}{
      author={Rieffel, Marc~A.},
       title={Quantization and {$C^\ast$}-algebras},
        date={1994},
   booktitle={{$C^\ast$}-algebras: 1943--1993 ({S}an {A}ntonio, {TX}, 1993)},
      series={Contemp. Math.},
      volume={167},
   publisher={Amer. Math. Soc., Providence, RI},
       pages={66\ndash 97},
         url={http://dx.doi.org/10.1090/conm/167/1292010},
         doi={10.1090/conm/167/1292010},
      review={\MR{1292010 (95h:46108)}},
}

\bib{MR584266}{book}{
      author={Renault, Jean},
       title={A groupoid approach to {$C^{\ast} $}-algebras},
      series={Lecture Notes in Mathematics},
   publisher={Springer, Berlin},
        date={1980},
      volume={793},
        ISBN={3-540-09977-8},
      review={\MR{584266}},
}

\bib{MR1399267}{article}{
      author={Sobczyk, J.},
       title={Quantum {$E(2)$} groups and {L}ie bialgebra structures},
        date={1996},
        ISSN={0305-4470},
     journal={J. Phys. A},
      volume={29},
      number={11},
       pages={2887\ndash 2893},
         url={https://doi.org/10.1088/0305-4470/29/11/022},
         doi={10.1088/0305-4470/29/11/022},
      review={\MR{1399267}},
}

\bib{arXiv:q-alg/9712040}{misc}{
      author={Stachura, Piotr},
       title={Double {L}ie algebras and {M}anin triples},
         how={preprint},
        date={1997},
  eprint={\href{http://arxiv.org/abs/q-alg/9712040}{\texttt{arXiv:q-alg/9712040
  [math.QA]}}},
}

\bib{MR1764452}{article}{
   author={Stachura, Piotr},
   title={$C^*$-algebra of a differential groupoid},
   note={With an appendix by S. Zakrzewski},
   conference={
      title={Poisson geometry},
      address={Warsaw},
      date={1998},
   },
   book={
      series={Banach Center Publ.},
      volume={51},
      publisher={Polish Acad. Sci. Inst. Math., Warsaw},
   },
   date={2000},
   pages={263--281},
   review={\MR{1764452}},
}

\bib{MR3681683}{article}{
      author={Stachura, Piotr},
       title={On {P}oisson structures related to {$\kappa$}-{P}oincar\'{e}
  group},
        date={2017},
        ISSN={0219-8878},
     journal={Int. J. Geom. Methods Mod. Phys.},
      volume={14},
      number={9},
       pages={1750133, 14},
         url={https://doi.org/10.1142/S021988781750133X},
         doi={10.1142/S021988781750133X},
      review={\MR{3681683}},
}

\bib{MR3950818}{article}{
      author={Stachura, Piotr},
       title={The {$\kappa$}-{P}oincar\'{e} group on a {$C^*$}-level},
        date={2019},
        ISSN={0129-167X},
     journal={Internat. J. Math.},
      volume={30},
      number={4},
       pages={1950022, 43},
         url={https://doi.org/10.1142/S0129167X19500228},
         doi={10.1142/S0129167X19500228},
      review={\MR{3950818}},
}

\bib{MR611561}{article}{
      author={Takeuchi, Mitsuhiro},
       title={Matched pairs of groups and bismash products of {H}opf algebras},
        date={1981},
        ISSN={0092-7872},
     journal={Comm. Algebra},
      volume={9},
      number={8},
       pages={841\ndash 882},
         url={http://dx.doi.org/10.1080/00927878108822621},
         doi={10.1080/00927878108822621},
      review={\MR{611561 (83f:16013)}},
}

\bib{MR890482}{article}{
      author={Woronowicz, S.~L.},
       title={Twisted {${\rm SU}(2)$} group. {A}n example of a noncommutative
  differential calculus},
        date={1987},
        ISSN={0034-5318},
     journal={Publ. Res. Inst. Math. Sci.},
      volume={23},
      number={1},
       pages={117\ndash 181},
         url={http://dx.doi.org/10.2977/prims/1195176848},
         doi={10.2977/prims/1195176848},
      review={\MR{890482 (88h:46130)}},
}

\bib{MR1096123}{article}{
      author={Woronowicz, S.~L.},
       title={Unbounded elements affiliated with {$C^*$}-algebras and
  noncompact quantum groups},
        date={1991},
        ISSN={0010-3616},
     journal={Comm. Math. Phys.},
      volume={136},
      number={2},
       pages={399\ndash 432},
         url={http://projecteuclid.org/euclid.cmp/1104202358},
      review={\MR{1096123 (92b:46117)}},
}

\bib{MR1232646}{article}{
      author={Woronowicz, S.~L.},
      author={Napi\'{o}rkowski, K.},
       title={Operator theory in the {$C^\ast$}-algebra framework},
        date={1992},
        ISSN={0034-4877},
     journal={Rep. Math. Phys.},
      volume={31},
      number={3},
       pages={353\ndash 371},
         url={https://doi.org/10.1016/0034-4877(92)90025-V},
         doi={10.1016/0034-4877(92)90025-V},
      review={\MR{1232646}},
}

\bib{MR1081011}{article}{
      author={Zakrzewski, S.},
       title={Quantum and classical pseudogroups. II. Differential and symplectic pseudogroups},
        date={1990},
        ISSN={0010-3616},
     journal={Comm. Math. Phys.},
      volume={134},
      number={2},
       pages={371\ndash 395},
         url={https://projecteuclid.org/euclid.cmp/1104201735},
}

\bib{MR1280372}{article}{
      author={Zakrzewski, S.},
       title={Quantum {P}oincar\'{e} group related to the
  {$\kappa$}-{P}oincar\'{e} algebra},
        date={1994},
        ISSN={0305-4470},
     journal={J. Phys. A},
      volume={27},
      number={6},
       pages={2075\ndash 2082},
         url={http://stacks.iop.org/0305-4470/27/2075},
      review={\MR{1280372}},
}

\bib{MR1463043}{article}{
      author={Zakrzewski, S.},
       title={Poisson structures on {P}oincar\'{e} group},
        date={1997},
        ISSN={0010-3616},
     journal={Comm. Math. Phys.},
      volume={185},
      number={2},
       pages={285\ndash 311},
         url={https://mathscinet.ams.org/mathscinet-getitem?mr=1463043},
         doi={10.1007/s002200050091},
      review={\MR{1463043}},
}

\end{biblist}
\end{bibdiv}

\end{document}